\newtheorem{theorem}{Theorem}
\newtheorem{proposition}{Proposition}
\newtheorem{remark}{Remark}
\newtheorem{lemma}{Lemma}
\def\be{\begin{equation}}
\def\ee{\end{equation}}
\def\ben{\begin{displaymath}}
\def\een{\end{displaymath}}
\def\baa{\begin{eqnarray}}
\def\eaa{\end{eqnarray}}
\def\ba{\begin{array}}
\def\ea{\end{array}}
\renewcommand{\leq}{\leqslant}
\renewcommand{\geq}{\geqslant}
\newcommand{\supp}{\operatorname{supp}}
\newcommand{\Tr}{\operatorname{Tr}}
\newcommand{\Res}{\operatorname{Res}}
\newcommand{\Id}{\operatorname{Id}}
\renewcommand{\det}{\operatorname{det}}
\begin{document}
\title {Metrics of constant positive curvature with conical singularities, Hurwitz spaces,  and ${\rm det}\, \Delta$ }

\author{Victor Kalvin \footnote{{\bf E-mail: victor.kalvin@concordia.ca, vkalvin@gmail.com}}, Alexey Kokotov\footnote{{\bf E-mail: alexey.kokotov@concordia.ca}} }
\date{}
\maketitle

\vskip0.5cm
\begin{center}
Department of Mathematics and Statistics, Concordia
University, 1455 de Maisonneuve Blvd. West, Montreal, Quebec, H3G
1M8 Canada \end{center}

\vskip2cm
{\bf Abstract.} Let $f: X\to {\Bbb  C}P^1$ be a meromorphic function of degree $N$  with simple poles and simple critical points on a compact Riemann surface $X$ of genus $g$ and let $\mathsf m$ be the standard round metric of curvature $1$ on the Riemann sphere
${\Bbb  C}P^1$.     Then the pullback $f^*\mathsf m$ of $\mathsf m$ under $f$  is a metric of curvature $1$ with conical singularities of conical angles $4\pi$ at the critical points of $f$.    We study the $\zeta$-regularized determinant of the Laplace operator on $X$ corresponding to the  metric $f^*\mathsf m$ as a functional on the moduli space of the pairs $(X, f)$  (i.e. on the  Hurwitz space $H_{g, N}(1, \dots, 1)$) and derive an explicit formula for the functional.

\vskip2cm

\section{Introduction}\label{intro}

The determinants of Laplacians on Riemann surfaces often appear in the frameworks of Geometric Analysis (in connection with Sarnak program \cite{Sarnak}) and quantum field theory (in connection with various partition functions).
An explicit computation of the determinant of the Laplacian corresponding to the metric of constant negative curvature (\cite{DPh}, see also \cite{Fay92}) provides an example of a beautiful interplay between the spectral theory and geometry of moduli spaces of Riemann surfaces. Due to Gauss-Bonnet Theorem metrics of constant positive curvature on compact Riemann surfaces are necessarily singular (unless the genus of the surface is equal to zero) and the same is true for metrics of zero curvature (unless the genus is equal to one). The determinants of the Laplacians in flat singular metrics are intensively studied (see, e. g., \cite{Khuri}, \cite{Au-Sal}, \cite{KK-DG}, \cite{ProcAMS}, \cite{KKH-Communications}), the case of constant positive curvature attracted attention only recently (in particular, in connection with quantum Hall effect). The only explicit computation of the determinant in the case of constant positive curvature (except for the classical result for the smooth round metric on the sphere \cite{Weisberger}) is done in the case of the sphere with two antipodal conical singularities (\cite{Sp}, see also \cite{Klevtsov} for corrections and  a relation of this result to quantum physics). 
According to the result of Troyanov \cite{Troyanov}, there are only two classes of genus zero surfaces with metrics of constant curvature $1$ with two conical points:
\begin{itemize}
\item Surfaces with two {\it antipodal}  conical singularities (i.e.  the distance  between them is $\pi$ and they are conjugate points) of the same (arbitrary positive) conical angle.

\item Surfaces with two conical points of the same angle $2\pi k$, $k=2,3, \dots$;  the corresponding conical metric is the pullback $f^*\mathsf m$ of the standard metric $\mathsf m$ of curvature $1$ on $\mathbb CP^1$ under a
meromorphic function
$f:{\mathbb C}P^1\to {\mathbb C}P^1$
with two critical points.
\end{itemize}

As we already mentioned, the determinant of the Laplacian on the surfaces of the first class was found in \cite{Sp, Klevtsov}.  The  motivation of this paper comes mainly from the need to compute the determinant of the Laplacian $\Delta^{f^*\mathsf m}$ for the surfaces of the second class. For this determinant we obtain the explicit formula 
\begin{equation}\label{eq0}{\operatorname{det}}'\, \Delta^{f^*\mathsf m}=C|z_1-z_2|^{\frac{1}{2}}(1+|z_1|^2)^{-\frac{1}{4}}(1+|z_2|^2)^{-\frac{1}{4}}\,,\end{equation}
which is  the most elementary consequence of our main result. Here $f:{\mathbb C}P^1\to {\mathbb C}P^1$ is a meromorphic function with two simple critical points and the corresponding critical values $z_1$ and $z_2$, the constant $C$ is independent of $z_1$ and $z_2$, and ${\operatorname{det}}'$ is the modified (i.e. with zero mode excluded) $\zeta$-regularized determinant. The constant $C$ can be  found by using the result~\cite{Sp}: one has to consider a sphere with two antipodal singularities of conical angle $4\pi$
and compare formula (\ref{eq0}) with the one given in \cite{Sp}).

Our main result generalizes (\ref{eq0}) to the case of compact Riemann surfaces $X$ of arbitrary genus and arbitrary meromorphic functions $f: X\to {\mathbb C}P^1$ (for simplicity we consider only functions $f$ with simple critical values, the modifications required to consider the general case are insignificant and of no interest, and the result is essentially the same).

Let $H_{g, N}(1, \dots, 1)$ be the Hurwitz moduli space of pairs $(X, f)$, where $X$ is a compact Riemann surface of genus $g$ and $f$ is a meromorphic function on $X$ of degree $N$ and $M=2g-2+2N$ simple critical points. We assume that all the critical values are finite, i.e.  the poles of the function $f$ are not the critical points and, therefore, are simple. The  part $(1, \dots, 1)$ ($N$ times) of the symbol $H_{g, N}(1, \dots, 1)$ shows the branching scheme over the point at infinity of the base of the ramified covering
$f: X\to {\mathbb C}P^1$,
the preimage of $\infty\in {\mathbb C}P^1$ consists of $N$ distinct points.
The space $H_{g, N}(1, \dots, 1)$  is known to be a connected complex manifold of complex dimension $M$, the critical values $z_1, \dots, z_M$ of the function $f$ can be taken as local coordinates.

Let $\tau$ stand for the Bergman tau-function on the Hurwitz space $H_{g, N}(1, \dots, 1)$ (also known as isomonodromic tau-function of the Hurwitz Frobenius manifold). Referring the reader to \cite{KK},  \cite{KK1}, \cite{KKZ} for definition and properties of this object, we would like to emphasize that the explicit expressions for $\tau$ through holomorphic invariants of the Riemann surface (prime form, theta functions, and etc.) and the divisor of the meromorphic function $f$ are known; see \cite{KK1,KS} for genera $g=0,1$ and~\cite{KK,KKZ} for $g\geq 2$.

The metric $f^*{\mathsf m}$ on $X$ is a conical metric of curvature $1$ with conical singularities at the critical points $P_1, \dots, P_M$ of the function $f$, the  conical angle at any critical point is $4\pi$.

In the present paper we first show that the operator zeta-function $\zeta(s)$ of the Friedrichs extension of the Laplace operator $\Delta$ 
is regular at the point $s=0$ and, therefore, one can define the (modified, i.e. with zero mode excluded) $\zeta$-regularized determinant
$$ {\rm det}'\, \Delta^{f^*{\mathsf m}}:=\exp\{-\zeta'(0)\}\,.$$
Then we prove the following explicit formula for this determinant:
\begin{equation}\label{teorema-p}
{\rm det}'\Delta=C\,{\rm det}\, \Im {\mathbb B} |\tau|^2 \prod_{k=1}^M(1+|z_k|^2)^{-1/4}\,,
\end{equation}
where the constant $C$ is independent of the
point $(X, f)$ of the space  $H_{g, N}(1, \dots, 1)$ and ${\mathbb B}$ is the matrix of $b$-periods of the Riemann surface $X$ (in the case $g=0$ the factor ${\rm det}\, \Im {\mathbb B}$ in (\ref{teorema-p}) should be omitted).
In the simplest case one has $g=0$, $N=2$, and  $\tau=(z_1-z_2)^{1/4}$, then~\eqref{teorema-p} implies \eqref{eq0}.

{\bf Acknowledgements.} We would like to thank Luc Hillairet for fruitful discussions and for communicating some important ideas.
We are extremely grateful to  Semyon Klevtsov and Paul Wiegmann for stimulating questions one of which gave rise to the present work.
We also thank Dmitry Korotkin for several advices. The research of the second author was supported by NSERC.

\section{Heat kernel asymptotic and $\det'\Delta$}

Let  $\Delta$ stand for the Friedrichs extension  of  the Laplace-Beltrami operator on $(X, f^*\mathsf m)$.  The asymptotic of $\Tr e^{-\Delta t}$ as $t\to 0+$ can be found by methods developed in~\cite{BS1, BS2, Seeley}.
We need some preliminaries before we can formulate the result.

Introduce the local geodesic polar coordinates $(r,\varphi)$ on $(X,f^*\mathsf m )$ with center at $P_k$, where $\varphi\in [0,4\pi)$ and $r\in[0,\epsilon]$, $\epsilon$ is smaller than the distance from $P_k$ to any other conical singularity. 
In the coordinates $(r,\varphi)$ the metric $f^*\mathsf m$ takes the form
$$
f^*\mathsf m (r,\varphi)= dr^2+  \sin ^2 r d\varphi^2.
$$
 Let  $h(r)=2\sin r$ and $\psi=\varphi/2\in \Bbb S^1$. Consider  the selfadjoint operator
\begin{equation}\label{e1}
\mathcal A(r)=- r^2h^{-2}(r)\partial^2_\psi- r^2 \bigl(\cot^2 r+2\bigr)/4,\quad r\in[0,\epsilon],
\end{equation}
  in $L^2(\Bbb S^1)$  with the domain $H^2(\Bbb S^1)$.  This operator is related to $\Delta$ in the following way:  In a small neighbourhood of $P_k$ the Laplacian can be written as
  $$
  \Delta =  h^{-1/2}(-\partial_r^2+r^{-2} \mathcal A(r))h^{1/2}  $$ acting  in  $L^2(h\,dr\,d\psi)$. The  operator $L=-\partial_r^2+r^{-2} \mathcal A(r)$  falls into the class of operators studied in~\cite{BS2} as $\mathcal A(r)$ satisfies the requirements~\cite[(A1)--(A6), page 373]{BS2}. 
  Then \cite[Thm~5.2 and Thm 7.1]{BS2} imply that for any smooth cut-off function $\varrho$ supported sufficiently close to the singularity $P_k$ and such that $\varrho=1$ in a small vicinity of $P_k$ one has
\begin{equation}\label{HA}
\Tr \varrho e^{-\Delta t}\thicksim\sum_{j=0}^\infty A_j t^{\frac {j-3}{2}} +\sum_{j=0}^\infty B_j t^{-\frac{\alpha_j+4}{2}}+\sum_{j: \alpha_j\in\Bbb Z_- } C_j  t^{-\frac{\alpha_j+4}{2}} \log t\quad \text{    as } t\to0+,
\end{equation}
where $A_j$, $B_j$, and $C_j$ are some coefficients and $\{\alpha_j\}$ is a sequence of complex numbers with $\Re\alpha_j\to-\infty$. Moreover,  the coefficient  before $t^0\log t$ in the above asymptotic is given by
$\frac1 4\Res \zeta(-1)$, where $\zeta$ stands for the $\zeta$-function of $(\mathcal A(0)+1/4)^{1/2}$; see \cite[f-la (7.24)]{BS2}. Clearly, $\mathcal A(0)=-2^{-2}\partial^2_\psi-1/4$ and the $\zeta$-function of  $( {\mathcal A(0)+1/4})^{1/2}$ is given by 

$$\zeta(s)=2\sum_{j\geq1} (j/2)^{-s}=2^{s+1}\zeta_R(s),$$
where $\zeta_R$ is the Riemann zeta function. Thus $\Res \zeta(-1)=0$ and the term with $t^0\log t$ in~\eqref{HA} is absent.

For a cut-off function $\rho$ supported outside of conical points $P_1,\dots,P_M$ the short time asymptotic  $\Tr(1-\rho) e^{-\Delta t}\sim \sum_{j\geq -2} a_jt^{j/2} $ can be obtained in the standard way from the formulas for the parametrix $B^N (\lambda)$ approximating $(\Delta-\lambda)^{-2}$
 to the order $N$, see e.g.~\cite{Seeley} or~\cite[Problem 5.1]{Shubin}. Hence the short time asymptotic for $e^{-\Delta t}$ is of the form~\eqref{HA}, where the term $t^0\log t$ is absent. As a consequence, the $\zeta$-function $$\zeta(s)=\frac{1}{\Gamma(s)}\int_0^\infty t^{s-1}(\Tr e^{-t\Delta} -1)\,dt $$ has no pole at zero and we can define the modified (i.e. with zero mode excluded)  determinant $\det' \Delta=\exp\{-\zeta'(0)\}$. 
\section{Asymptotic of solutions near conical singularities}

In a vicinity of  $P_k$ we introduce  the {\it distinguished} local parameter $x=\sqrt{z-z_k}$ . Since
\begin{equation}\label{m}
\mathsf m=\frac{4|dz|^2}{(1+|z|^2)^2},
\end{equation}
 we have
\begin{equation}\label{lap}
f^*\mathsf m({x},\bar x)=\frac {16|{x}|^2\,|d{x}|^2}{(1+|{x}^2+z_k|^2)^2}\quad\text{and}\quad \Delta^*= -\frac {(1+|{x}^2+z_k|^2)^2} {4|{x}|^2}  \partial_{x}\partial_{\bar {x}}.
\end{equation}
Here and elsewhere we denote the Laplace-Beltrami operators by $\Delta^*$ reserving the notation $\Delta$ for their Friederichs extensions. The complex plane $\Bbb C$ endowed with the metric $f^*\mathsf m({x},\bar x)$ has a ``tangent cone'' of angle $4\pi$ at ${x}=0$.

\begin{lemma}\label{expansion}Let $u,F\in L^2(X)$ and $\Delta^* u=F$ (in the sense of distributions). 
Then in a small vicinity of ${x}=0$ we have
\begin{equation}\label{EU}
u({x},\bar {x})=a_{-1}\bar {x}^{-1} +b_{-1}{x}^{-1}+  a_0\ln|{x}|+ b_0+ a_1\bar {x} +b_1 {x}+ R({x},\bar {x}),
\end{equation}
where  $a_k$ and $b_k$ are some coefficients and the remainder $R$ satisfies $R({x},\bar {x})=O(|{x}|^{2-\epsilon})$ with  any $\epsilon>0$  as $x\to 0$.  Moreover,  the equality  can be differentiated and   the remainder satisfies $\partial_{x} R({x},\bar {x})=O(|{x}|^{1-\epsilon})$ and $\partial_{\bar {x}} R({x},\bar {x})=O(|{x}|^{1-\epsilon})$ with  any $\epsilon>0$.
\end{lemma}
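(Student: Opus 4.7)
The plan is to reduce the equation to Poisson's equation in the plane, separate variables, and read off the asymptotic expansion from the indicial roots of the resulting Bessel-type mode ODEs, with the remainder controlled by a weighted $L^2$ argument.

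From~\eqref{lap}, the hypothesis $\Delta^\ast u=F$ is locally equivalent to $\partial_x\partial_{\bar x} u=\tilde F$ with $\tilde F:=-\tfrac{4|x|^2}{(1+|x^2+z_k|^2)^2}F$. The conical weight supplies an extra $|x|^2$ factor, so that together with $F\in L^2(X)$ (whose local measure is $\sim|x|^2\,dx\,d\bar x$) one obtains the weighted bound $\int|\tilde F|^2|x|^{-2}\,dx\,d\bar x<\infty$ near $x=0$. In polar coordinates $x=re^{i\varphi}$, the Fourier decompositions $u=\sum_n u_n(r)e^{in\varphi}$ and $\tilde F=\sum_n \tilde F_n(r)e^{in\varphi}$ decouple the equation into
\[
\tfrac14\bigl(u_n''+r^{-1}u_n'-n^2r^{-2}u_n\bigr)=\tilde F_n(r),
\]
whose homogeneous solutions are $r^{\pm|n|}$ for $n\neq 0$ and $1,\log r$ for $n=0$; the indicial roots in mode $n$ are $\pm|n|$.

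I would next test these roots against the hypothesis $u\in L^2(X)$, equivalently $\int_0^\epsilon r^3|u_n|^2\,dr<\infty$, i.e.~$\operatorname{Re}\lambda>-2$ on the exponent. Hence $r^{-|n|}$ is admissible only for $|n|\leq 1$. Translating back via $r\,e^{i\varphi}=x$, $r\,e^{-i\varphi}=\bar x$, $r^{-1}e^{i\varphi}=\bar x^{-1}$, $r^{-1}e^{-i\varphi}=x^{-1}$, together with the $n=0$ solutions $1,\log|x|$, yields exactly the six profiles of~\eqref{EU}; the coefficients $a_k,b_k$ are then identified with the constants in front of these admissible homogeneous solutions. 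For each $|n|\geq 2$ the $L^2$ condition forces the homogeneous part to vanish, and the remaining particular solution is constructed by variation of parameters with the basis $\{r^{|n|},r^{-|n|}\}$; for $|n|\leq 1$ the particular solution is chosen so that the admissible homogeneous pieces split off as the $a_k,b_k$ and the rest enters the remainder $R$.

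The main technical step is the uniform-in-$n$ pointwise estimate of the remainder. Applying Cauchy--Schwarz in the resulting integral formulas against the weight $s^{-1}\,ds$ dictated by the weighted bound on $\tilde F$ gives $|u_n(r)|\lesssim r^{2-\epsilon}$ multiplied by a sequence whose $\ell^2$-norm in $n$ is controlled by $\|\tilde F\|_{L^2(|x|^{-2}\,dx\,d\bar x)}$; a Parseval argument in $\varphi$ then yields $R(x,\bar x)=O(|x|^{2-\epsilon})$. The $\epsilon$-loss is intrinsic to converting weighted $L^2$ information into pointwise information in two dimensions. The derivative estimates follow by differentiating the variation-of-parameters formulas under the integral sign (or, equivalently, by interior elliptic regularity on dyadic annuli combined with scaling), losing one power of $|x|$ to give $\partial_x R,\partial_{\bar x}R=O(|x|^{1-\epsilon})$. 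The delicate point I anticipate is the endpoint case $|n|=1$, where both homogeneous solutions $r$ and $r^{-1}$ are admissible so that the integration limits in variation of parameters must be chosen carefully to extract $a_{\pm1},b_{\pm1}$ as genuine asymptotic coefficients rather than as artifacts of the particular solution. This is precisely the Kondratiev--Maz'ya asymptotic structure for elliptic operators with conical singularities, which the present lemma instantiates.
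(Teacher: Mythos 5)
Your argument is correct and reaches the same asymptotic structure, but it takes a technically different route from the paper. The paper localizes with a cut-off $\chi$, passes to the variable $r=|x|^2$, and applies the Mellin transform in $r$, keeping the angular variable as an operator: the expansion terms then appear as residues of the inverse Mellin integral at the poles $s=\pm 1/2$ (simple) and $s=0$ (double) of the pencil $\bigl(4^{-1}\partial_\varphi^2-s^2\bigr)^{-1}$ when the contour is shifted from $\Re s=1-\epsilon$ to $\Re s=-1+\epsilon$, and the remainder bound comes from a parameter-dependent elliptic estimate on the line $\Re s=-1+\epsilon$ combined with Parseval and Sobolev embedding. You instead expand in Fourier modes in $\varphi$ and analyze each radial ODE by indicial roots and variation of parameters, with Cauchy--Schwarz against the weight $s^{-1}\,ds$ and an $\ell^2$ summation over modes. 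These are the two standard faces of Kondratiev--Maz'ya theory (the paper indeed cites KMR, Ch.~6), and the poles of the Mellin symbol are exactly your indicial roots $\pm|n|/2$ in the variable $r=|x|^2$. What the Mellin route buys is that the singular terms and the remainder estimate come out in one stroke, uniformly over all angular modes, without having to handle the endpoint modes $|n|\leq 1$ by hand; what your route buys is explicit integral formulas for the coefficients $a_k,b_k$ and a more elementary remainder estimate. Two small points you gloss over that the paper handles explicitly: the localization (the commutator $[\Delta^*,\chi]u$ must be shown to lie in $L^2$, which requires interior elliptic regularity away from the conical points), and the uniform-in-$n$ control of the homogeneous pieces $r^{|n|}$ for $|n|\geq 2$ that enter the remainder through the matching at $r=\epsilon$; both are routine but should be stated.
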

\begin{proof} The proof consists of standard steps based on the Mellin transform and a priori elliptic estimates, see e.g.~\cite[Chapter 6]{KMR} for details. 

Let $\chi\in C^\infty_c(X)$ be a cut-off function supported in the neighbourhood $|x|<2\delta$ of $P_k$ and  such that $\chi(|x|)=1$ for $|x|<\delta$, where $\delta$ is small.  Then $\Delta^* u=F$ implies
\begin{equation}\label{E1}
-|x|^{-2}\partial_x\partial_{\bar x}\bigl(\chi u\bigr)(x,\bar x)=4\frac{\bigl(\chi F\bigr)(x,\bar x)+[\Delta^*,\chi]u(x,\bar x)}{{(1+|{x}^2+z_k|^2)^2} },
\end{equation}
where the right hand side (extended from its support to $X$ by zero) is in $L^2(X)$. Indeed, for any cut-off function $\varrho\in C_c^\infty(X\setminus\{P_1,\dots, P_M\})$ the standard result on smoothness of solutions to elliptic problems gives $\varrho u\in H^1(X)$, where the Sobolev space $H^1(X)$ is the domain of closed densely defined quadratic form of $\Delta^*$ in $L^2(X)$. For a suitable $\varrho$ we obtain $[\Delta^*,\chi]u=[\Delta^*,\chi]\varrho u\in L^2(X)$ and hence the right hand side of~\eqref{E1}  is in $L^2(X)$.

We rewrite~\eqref{e1} in the polar coordinates $(r,\varphi)$, where $r=|x|^2$ and $\varphi=\arg x$,  multiply both sides by $r^2$, and then apply the  Mellin transform $\hat f(s)= \int_0^\infty r^{s-1}f(r)\,dr$, assuming that all functions are extended from their supports to $r\in [0,\infty)$ and $\varphi\in [0,2\pi)$ by zero. As a result~\eqref{e1} takes the form $ -\bigl(4^{-1}\partial_\varphi^2-s^2\bigr)\widehat{\chi u}(s) = \widehat{G}(s)$. 
  Due to the inclusion $u\in L^2(X)$ (resp. $r^{-2}G\in L^2(X)$) the function $s\mapsto \widehat{\chi}(s)\in L^2(\Bbb S^1)$  (resp. $s\mapsto \widehat{G}(s)\in L^2(\Bbb S^1)$) is analytic in the half-plane $\Re s>1$ (resp. $\Re s>-1 $)  and square summable along any vertical line in the corresponding half-plane. In the strip $-1<\Re s<1$ the resolvent $\bigl(4^{-1}\partial_\varphi^2-s^2\bigr)^{-1}: L^2(\Bbb S^1)\to L^2(\Bbb S^1)$ has simple poles at $s=\pm1/2$ and a double pole at $s=0$. We have
$$
\bigl(\chi u\bigr)(r)=\frac{1}{2\pi i}\int_{1-\epsilon-i\infty}^{1-\epsilon+i\infty} r^{-s}\widehat{\chi u}(s)\,ds=-\frac{1}{2\pi i}\int_{1-\epsilon-i\infty}^{1-\epsilon+i\infty} r^{-s}\bigl(4^{-1}\partial_\varphi^2-s^2\bigr)^{-1}\widehat{ G}(s)\,ds,
$$
where $\epsilon\in (0,1/2)$. The elliptic a priori estimate with parameter
\begin{equation}\label{EP}
\begin{aligned}
\sum_{\ell=0}^2 |s|^{2\ell}\Bigl\|\partial_\varphi^{2-\ell}\Bigl\{ \bigl(4^{-1}\partial_\varphi^2&-s^2\bigr)^{-1}\widehat G(s)\Bigr\}; L^2(\Bbb S^1)\Bigr\|^2\\ &\leq  C\Bigl(\|\widehat G(s); L^2(\Bbb S^1)\|^2+\| \bigl(4^{-1}\partial_\varphi^2-s^2\bigr)^{-1}\widehat G(s); L^2(\Bbb S^1)\|^2\Bigr),
\end{aligned}
\end{equation}
where the last term can be neglected  for sufficiently large values of $|s|$, justifies  the change of the contour of integration in the inverse Mellin transform from $\Re s=1-\epsilon$ to $\Re s=-1+\epsilon$.
We use the Cauchy theorem and arrive at
\begin{equation*}\label{srez}
\bigl(\chi u\bigr)({x},\bar {x})=a_{-1}\bar {x}^{-1} +b_{-1}{x}^{-1}+  a_0\ln|{x}|+ b_0+ a_1\bar {x} +b_1 {x}+ R({x},\bar {x}),
\end{equation*}
where
$$
 R({x},\bar {x})=R(r,\varphi)=-\frac{1}{2\pi i}\int_{-1+\epsilon-i\infty}^{-1+\epsilon+i\infty} r^{-s}\bigl(4^{-1}\partial_\varphi^2-s^2\bigr)^{-1}\widehat{ G}(s)\,ds.
$$
The boundedness of  $s\mapsto\bigl(4^{-1}\partial_\varphi^2-s^2\bigr)^{-1}$ on the line $\Re s=-1+\epsilon$ and~\eqref{EP} give

\begin{equation}\label{estS}
\sum_{\ell=0}^2 (1+|s|^2)^\ell\Bigl\|\partial_\varphi^{2-\ell}\Bigl\{ \bigl(4^{-1}\partial_\varphi^2-s^2\bigr)^{-1}\widehat G(s)\Bigr\}; L^2(\Bbb S^1)\Bigr\|^2\leq C\|\widehat G(s); L^2(\Bbb S^1)\|^2,
\end{equation}
 where $C$ does not depend on $s$.  The Parseval equality turns~\eqref{estS}  into the estimate 
$$
\begin{aligned}
\int_0^{2\pi}\int_0^{2\delta}r^{-4+2\epsilon}\Bigl( \sum_{\ell=0}^2|(r\partial_r)^{\ell}\partial_\varphi^{2-\ell}R(r,\varphi)|^2&+|r\partial_r R(r,\varphi)|^2 \\&+|\partial_\varphi R(r,\varphi)|^2+ |R(r,\varphi)|^2\Bigr)\,r\,dr\,d\varphi<\infty.
\end{aligned}
$$
This together with Sobolev embedding theorem implies $$|x|^{-2+2\epsilon}R(x,\bar x)= O(1),\quad |x|^{-1+2\epsilon}\partial_{x}R(x,\bar x)= O(1), \quad |x|^{-1+2\epsilon}\partial_{\bar x}R(x,\bar x)= O(1).$$
The proof is complete. 
\end{proof}

Let $u\in L^2(X)$ and $v\in L^2(X)$ be such that  $\Delta^*u\in L^2(X)$ and $\Delta^*v\in L^2(X)$ (with differentiation understood in the sense of distributions) and bounded everywhere except possibly for $P_k$. Consider the form $$
\mathsf q[u,v]:=(\Delta^* u,v)-(u,\Delta^* v);
$$
here and elsewhere $(\cdot,\cdot)$ stands for the inner product in ${L^2(X)}$.  By Lemma~\ref{expansion} we have~\eqref{EU} and
\begin{equation}\label{Ev}
v({x},\bar {x})=c_{-1}\bar {x}^{-1} +d_{-1}{x}^{-1}+  c_0\ln|{x}|+ d_0+ c_1\bar {x} +d_1 {x}+ \tilde R({x},\bar {x}).
\end{equation}
The Stokes theorem  implies $$
\mathsf q[u,v]=\lim_{\epsilon\to0+}\int_{X\setminus\{x:|x|<\epsilon\}}(\Delta^* u\bar v-u\overline{\Delta^* v})\,d\textit{Vol}=2i\lim_{\epsilon\to 0+}\oint_{|{x}|=\epsilon}(\partial_{{x}}u)\bar v\,d {{x}}+u(\partial_{\bar {x}} \bar v)d\bar {x}.
$$
Now simple calculation in the right hand side allows to express $\mathsf q[u,v]$ in terms of coefficients in~\eqref{EU} and~\eqref{Ev}  as follows:
\begin{equation}\label{q}
\mathsf q[u,v]=4\pi(-a_{-1}\bar d_1-b_{-1}\bar  c_1-b_0\bar c_0/2+ a_0\bar d_0/2+b_1\bar c_{-1}+a_1\bar d_{-1}).
\end{equation}

Recall that $\Delta$ stands for the Friedrichs extension of the Laplace-Beltrami operator $\Delta^*$ on $(X,f^*\mathsf m)$. As is known, for the domain $\mathscr D$ of $\Delta$ we have  $\mathscr D\subset H^1(X)$. The embedding $H^1(X) \hookrightarrow L^2(X)$ is compact and  the spectrum of $\Delta$ is discrete. Thanks to $|u(p)|\leq \|u; H^1(X)\|$, $p\in X$, the functions in the domain $\mathscr D$ are bounded and thus for any $u\in\mathscr D$ the assertion of  Lemma~\ref{expansion}  is valid with $a_{-1}=b_{-1}=a_{0}=0$.

Let $\chi\in C^\infty_c(X)$ be a cut-off function supported in the neighbourhood $|x|<2\delta$ of $P_k$ and  such that $\chi(|x|)=1$ for $|x|<\delta$, where $\delta$ is small.  We denote the spectrum of $\Delta$ by $\sigma(\Delta)$ and introduce
$$
Y(\lambda)=\chi x^{-1} -(\Delta-\lambda)^{-1}(\Delta^*-\lambda)\chi x^{-1}, \quad \lambda\notin \sigma(\Delta),
$$
where the function $\chi x^{-1}$ is extended from the support of $\chi$ to $X$ by zero.
It is clear that $Y(\lambda)\in L^2(X)$, $Y(\lambda)\not =0$ as $\chi x^{-1}\notin\mathscr D$, and $(\Delta^*-\lambda)Y(\lambda)=0$. By  Lemma~\ref{expansion}  we  have
\begin{equation}\label{expY}
Y(x,\bar x; \lambda)=x^{-1}+c(\lambda)+a(\lambda)\bar x+b(\lambda) x+ O(|x|^{2-\epsilon}),\quad x\to0,\quad \epsilon>0.
\end{equation}

In the remaining part of this section we prove some results that previously appeared in the context of flat conical metrics~\cite{HK,HKK}. 

\begin{lemma}\label{b(lambda)}  The function $Y(\lambda)$ and the coefficient $b(\lambda)$ in~\eqref{expY}  are analytic functions of $\lambda$ in $\Bbb C\setminus\sigma(\Delta)$ and in a neighbourhood of zero. Besides, we have
 \begin{equation}\label{refl}
 4\pi\frac{d}{d\lambda}b(\lambda)=\bigl(Y(\lambda),\overline{Y(\lambda)}\bigr).
 \end{equation}
 \end{lemma}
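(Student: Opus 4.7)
The plan has two parts: establishing analyticity of $Y(\lambda)$ and $b(\lambda)$, and deriving the identity (\ref{refl}). First, since $\chi x^{-1}\in L^2(X)$ (the factor $|x|^2$ in the volume form kills the singularity), the function $(\Delta^*-\lambda)\chi x^{-1} = [\Delta^*,\chi]x^{-1}-\lambda\chi x^{-1}$ depends polynomially on $\lambda$ as an element of $L^2(X)$, and the standard analyticity of $\lambda\mapsto(\Delta-\lambda)^{-1}$ off the spectrum yields analyticity of $Y(\lambda)$ on $\mathbb{C}\setminus\sigma(\Delta)$. Near $\lambda=0$ the resolvent has a simple pole with residue $-P_0$ (projection onto constants). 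Using the Green-type identity (\ref{q}) with $u=\chi x^{-1}$, $v=1$ (note that $a_0=0$ for $u$), one sees $\int_X\Delta^*\chi x^{-1}\,d\mathit{Vol}=0$, so $P_0(\Delta^*-\lambda)\chi x^{-1}=-\lambda P_0\chi x^{-1}$ and the apparent pole in $Y(\lambda)$ cancels.

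For analyticity of the coefficient $b(\lambda)$, I would argue that $U(\lambda):=(\Delta-\lambda)^{-1}(\Delta^*-\lambda)\chi x^{-1}$ lies in the Friedrichs domain, hence by Lemma \ref{expansion} has the expansion $U(\lambda)(x,\bar x)=\tilde b_0(\lambda)+\tilde a_1(\lambda)\bar x+\tilde b_1(\lambda)x+O(|x|^{2-\epsilon})$, with $b(\lambda)=-\tilde b_1(\lambda)$. The coefficients in this expansion can be extracted by continuous linear functionals on $U(\lambda)$ (via the residue computation in the Mellin-contour argument of Lemma \ref{expansion}, or equivalently via contour integrals $\oint_{|x|=\rho}\ldots$ paired with suitable kernels), so the analyticity of $\lambda\mapsto U(\lambda)$ in the relevant weighted topology transfers to analyticity of $b(\lambda)$.

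For (\ref{refl}), differentiate $(\Delta^*-\lambda)Y(\lambda)=0$ to get $(\Delta^*-\lambda)Y'(\lambda)=Y(\lambda)$. Because the $x^{-1}$-coefficient of $Y(\lambda)$ is identically $1$, the expansion of $Y'(\lambda)$ starts one order higher: $Y'(\lambda)=c'(\lambda)+a'(\lambda)\bar x+b'(\lambda)x+O(|x|^{2-\epsilon})$, without logarithmic or $x^{-1},\bar x^{-1}$ terms. Now take $u=Y'(\lambda)$ and $v=\overline{Y(\lambda)}$ in (\ref{q}). Since $\Delta^*$ is real, $\Delta^*\overline{Y(\lambda)}=\bar\lambda\overline{Y(\lambda)}$, so a direct computation gives $\mathsf q[Y'(\lambda),\overline{Y(\lambda)}]=(Y(\lambda),\overline{Y(\lambda)})$. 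On the other hand, $\overline{Y(\lambda)}$ has expansion coefficients $c_{-1}=1$, $d_0=\overline{c(\lambda)}$, $c_1=\overline{b(\lambda)}$, $d_1=\overline{a(\lambda)}$, and inserting these together with those of $Y'(\lambda)$ into (\ref{q}) makes all contributions vanish except $b_1\bar c_{-1}=b'(\lambda)\cdot 1$, giving $\mathsf q=4\pi b'(\lambda)$. Comparing the two evaluations proves the identity.

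The main technical obstacle is to make rigorous the differentiation of (\ref{expY}) in $\lambda$ and the application of (\ref{q}) to the pair $(Y'(\lambda),\overline{Y(\lambda)})$, as both require uniform control of the remainder in the asymptotic as $\lambda$ varies. I expect this to come from repeating the Mellin-transform and elliptic-regularity argument of Lemma \ref{expansion} for the $\lambda$-dependent equation satisfied by $Y'(\lambda)$, exploiting the fact that the Mellin symbol $4^{-1}\partial_\varphi^2-s^2$ and its poles at $s=0,\pm 1/2$ are independent of $\lambda$, so the construction of the expansion coefficients is a $\lambda$-holomorphic operation.
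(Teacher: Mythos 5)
Your proposal is correct and follows essentially the same route as the paper: the pole of the resolvent at $\lambda=0$ is cancelled by the observation $\bigl((\Delta^*-\lambda)\chi x^{-1},1\bigr)=\mathsf q[\chi x^{-1},1]-\lambda(\chi x^{-1},1)=-\lambda(\chi x^{-1},1)$, and the identity \eqref{refl} comes from applying the Green identity \eqref{q} to the pair $\bigl(\frac{d}{d\lambda}Y(\lambda),\overline{Y(\lambda)}\bigr)$; the paper performs the same computation after first writing $\frac{d}{d\lambda}Y(\lambda)=(\Delta-\lambda)^{-1}Y(\lambda)$, which makes it immediate that the derivative lies in $\mathscr D$ and has no singular coefficients. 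The one point where you genuinely diverge --- and which you flag as the ``main technical obstacle'' (holomorphy of $b(\lambda)$ and the legitimacy of differentiating the coefficients of \eqref{expY} in $\lambda$) --- is handled in the paper by a device you could adopt instead of your ``continuous coefficient functionals in a weighted topology'': one represents the coefficient as a pairing, $4\pi b(\lambda)=\mathsf q[Y(\lambda),\chi\bar x^{-1}]$, and since $\mathsf q[\,\cdot\,,\chi\bar x^{-1}]$ reduces to an $L^2$ inner product of $Y(\lambda)$ with the fixed (polynomially $\lambda$-dependent) function $(\Delta^*-\bar\lambda)\chi\bar x^{-1}$, holomorphy of $b$ follows at once from holomorphy of $\lambda\mapsto Y(\lambda)\in L^2(X)$, and its derivative $\mathsf q[Y'(\lambda),\chi\bar x^{-1}]$ is, by \eqref{q}, exactly the $x$-coefficient of $Y'(\lambda)$. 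This removes any need to rerun the Mellin-transform construction with $\lambda$-dependence, so the obstacle you anticipate does not actually arise.
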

\begin{proof}  
Since $\ker \Delta=\operatorname{span}\{1\}$, in a neighbourhood of $\lambda=0$ the resolvent admits the representation
$$(\Delta-\lambda)^{-1}f=\lambda^{-1}(f, {\textit{Vol}}(X)^{-1})+R(\lambda)\bigl(f-(f, {\textit{Vol}}(X)^{-1})),$$
where $R(\lambda)$ is a holomorphic operator function with values in the space of bounded operators in $L^2(X)$.
Observe that
$$
\bigl((\Delta^*-\lambda)\chi x^{-1},1\bigr)=\mathsf q[\chi x^{-1},1]-\lambda(\chi x^{-1},1)=-\lambda(\chi x^{-1},1);
$$
therefore $\lambda\mapsto Y(\lambda)\in L^2(X)$ is  holomorphic in a neighbourhood of zero.
Thanks to
$$
b(\lambda)=\frac{1}{4\pi}\mathsf q[Y(\lambda), \chi \bar x^{-1}]=\frac{1}{2\pi}\bigl(Y(\lambda), (\Delta^*-\bar\lambda)\chi \bar x^{-1}\bigr)
$$
the coefficient $b(\lambda)$ is holomorphic together with $Y(\lambda)$.

We obtain the equality~\eqref{refl} as follows:
$$
4\pi \frac{d}{d\lambda} b(\lambda)=\mathsf q\Bigl[\frac{d}{d \lambda} Y(\lambda),\chi \bar x^{-1}\Bigr]
= \mathsf q\Bigl[(\Delta-\lambda)^{-1}\chi x^{-1}-(\Delta-\lambda)^{-2}(\Delta^*-\lambda)\chi  x^{-1},\chi \bar x^{-1}\Bigr]
$$
$$
= \mathsf q\Bigl[(\Delta-\lambda)^{-1}Y(\lambda),\overline {Y(\lambda)}\Bigr]
= \bigl( (\Delta^*-\lambda)(\Delta-\lambda)^{-1}Y(\lambda),\overline{ Y(\lambda)}\bigr)
=\bigl(Y(\lambda),\overline{Y(\lambda)}\bigr).
$$
One can also show that the coefficients $c(\lambda)
$ and $a(\lambda)=\overline{a(\bar\lambda)}
$ in~\eqref{expY} are holomorphic in a neighbourhood of zero. Moreover,    $4\pi \frac {d}{d\lambda} a(\lambda)=\bigl(Y(\lambda),Y(\bar\lambda)\bigr)$.

\end{proof}
\begin{lemma}\label{Eigenf} Let $\{\Phi_j\}_{j=0}^\infty$ be a complete set of real normalized eigenfunctions of $\Delta$ and let $\lambda_j$ be the corresponding eigenvalues, i.e. $\Delta\Phi_j=\lambda_j\Phi_j$, $\Phi_j=\overline{\Phi_j}$, and $\|\Phi_j; L^2(X)\|=1$. Then for the coefficients $a_j$ and  $b_j=\bar a_j $  in the asymptotic
\begin{equation}\label{Phi_j}
\Phi_j(x,\bar x)=c_j+ a_j\bar x+ b_j x+O(|x|^{2-\epsilon}),\quad x\to 0, \quad \epsilon>0,
\end{equation}
we have
\begin{equation}\label{a_j}
16\pi^2\sum_{j=0}^\infty \frac{b_j^2}{(\lambda_j-\lambda)^2}=\bigl(Y(\lambda),\overline{Y(\lambda)}\bigr),
\end{equation}
where  the series is absolutely convergent.
\end{lemma}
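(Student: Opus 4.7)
The plan is to compute the pairing $(Y(\lambda),\Phi_j)$ in two different ways: once by applying the boundary form $\mathsf q$ of Section~3 to the pair $(Y(\lambda),\Phi_j)$, and once by using that $\Phi_j$ is an eigenfunction of the Friedrichs extension. Equating these two expressions yields the Fourier coefficients of $Y(\lambda)$ in the orthonormal basis $\{\Phi_j\}$, and then Parseval gives the claimed identity.

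First I would verify that Lemma~\ref{expansion} applies to $\Phi_j$: since $\Delta\Phi_j=\lambda_j\Phi_j\in L^2(X)$ and $\Phi_j\in \mathscr D\subset H^1(X)$, the eigenfunction is bounded, so the expansion~\eqref{Phi_j} near $P_k$ is just~\eqref{EU} with the singular coefficients $a_{-1}=b_{-1}=a_0=0$; reality of $\Phi_j$ forces $b_j=\bar a_j$ as in the statement. For $Y(\lambda)$, the relevant expansion~\eqref{expY} has $b_{-1}=1$, $a_{-1}=0$, $a_0=0$, $b_0=c(\lambda)$, $a_1=a(\lambda)$, $b_1=b(\lambda)$. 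Substituting these coefficients into~\eqref{q} (with $u=Y(\lambda)$, $v=\Phi_j$) collapses the six-term sum to a single surviving contribution, yielding
\begin{equation*}
\mathsf q[Y(\lambda),\Phi_j]=-4\pi\bar a_j=-4\pi b_j.
\end{equation*}

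Next I would compute $\mathsf q[Y(\lambda),\Phi_j]$ from its definition. Since $(\Delta^*-\lambda)Y(\lambda)=0$ and $\Delta\Phi_j=\lambda_j\Phi_j$ with $\lambda_j\in\mathbb R$,
\begin{equation*}
\mathsf q[Y(\lambda),\Phi_j]=(\Delta^*Y(\lambda),\Phi_j)-(Y(\lambda),\Delta^*\Phi_j)=(\lambda-\lambda_j)(Y(\lambda),\Phi_j).
\end{equation*}
Comparing the two evaluations of $\mathsf q$ gives
\begin{equation*}
(Y(\lambda),\Phi_j)=\frac{4\pi b_j}{\lambda_j-\lambda}.
\end{equation*}

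Finally, since $\{\Phi_j\}$ is a real orthonormal basis of $L^2(X)$ and $Y(\lambda)\in L^2(X)$, Parseval's identity and reality of $\Phi_j$ give
\begin{equation*}
\bigl(Y(\lambda),\overline{Y(\lambda)}\bigr)=\int_X Y(\lambda)^2\,d\textit{Vol}=\sum_{j=0}^\infty (Y(\lambda),\Phi_j)^2=16\pi^2\sum_{j=0}^\infty\frac{b_j^2}{(\lambda_j-\lambda)^2},
\end{equation*}
which is the required formula. Absolute convergence is automatic: replacing $b_j^2$ by $|b_j|^2$ produces $\|Y(\lambda);L^2(X)\|^2$, which is finite by Lemma~\ref{b(lambda)}. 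The main technical point is the first step, namely the careful bookkeeping of which coefficients in the asymptotic expansions of $Y(\lambda)$ and $\Phi_j$ survive in the bilinear expression~\eqref{q}; once this is done, the rest is Parseval and the spectral equation.
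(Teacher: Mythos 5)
Your argument is correct and follows essentially the same route as the paper: evaluate $\mathsf q[Y(\lambda),\Phi_j]=-4\pi b_j$ from the coefficient formula~\eqref{q}, convert it into the Fourier coefficient $(Y(\lambda),\Phi_j)$ via the eigenvalue equation, and conclude by Parseval together with the reality of the $\Phi_j$. Your sign for $(Y(\lambda),\Phi_j)$ is opposite to the one in the paper's expansion~\eqref{star}, but since the final identity involves only $b_j^2$ this discrepancy is immaterial.
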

\begin{proof} The asymptotic~\eqref{Phi_j} for $\Phi_j\in\mathscr D$  follows from Lemma~\ref{expansion}. Starting from the eigenfunction expansion of $Y(\lambda)$ we obtain
$$
Y(\lambda)=\sum_{j=0}^\infty \bigl(Y(\lambda),\Phi_j\bigr)\Phi_j=\sum_{j=0}^\infty \frac {\bigl(Y(\lambda),(\Delta-\bar\lambda)\Phi_j\bigr)}{\lambda_j-\lambda}\Phi_j
=\sum_{j=0}^\infty \frac {\mathsf q[Y(\lambda),\Phi_j]}{\lambda_j-\lambda}\Phi_j.
$$
This together with~\eqref{q} and $b_j=\bar a_j$ gives
\begin{equation}\label{star}
Y(\lambda)=-4\pi\sum_{j=0}^\infty\frac{b_j}{\lambda_j-\lambda}\Phi_j.
\end{equation}
As a consequence, the series in~\eqref{a_j} is absolutely convergent and
 $$
  \sum_{j=0}^\infty\frac{|b_j|^2}{|\lambda_j-\lambda|^2}=\frac{1}{16\pi^2}\|Y(\lambda); L^2(X)\|^2.
 $$
Finally, we obtain~\eqref{a_j} substituting the expression~\eqref{star} and its conjugate into the inner product $\bigl(Y(\lambda), \overline{Y(\lambda)}\bigl)$.
\end{proof}

\subsection{Explicit calculation of $b(0)$ and $b(-\infty)$}\label{sect}
In this subsection we study the behaviour of the coefficient $b(\lambda)$ from (\ref{expY})  as $\lambda\to-\infty$ and obtain explicit formulas for  $b(-\infty)=\lim_{\lambda\to-\infty}b(\lambda)$ and $b(0)$. Let us emphasize that the choice of the local parameter $x$ in a vicinity of  $P_k\in X$ is a part of definition of the coefficients $a(\lambda), b(\lambda)$, and $c(\lambda)$ in \eqref{expY}.

\begin{lemma} \label{binfty} As $\lambda\to-\infty$ for the coefficient $b(\lambda)$ in~\eqref{expY} we have
\begin{equation*}
b(\lambda)=\frac{1}{2}\frac{\bar z_k}{1+|z_k|^2}+O(|\lambda|^{-\infty}).
\end{equation*}

\end{lemma}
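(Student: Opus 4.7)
The plan is to carry out a semiclassical analysis of $(\Delta^*-\lambda)Y(\lambda)=0$ near $P_k$ and extract the leading $\lambda\to-\infty$ behaviour of $b(\lambda)$ from an explicit local model. In the distinguished coordinate $x=re^{i\theta}$, Fourier-decompose $Y=\sum_{m\in\mathbb Z}Y_m(r;\lambda)e^{im\theta}$: the prescribed singularity $x^{-1}=r^{-1}e^{-i\theta}$ lives in mode $m=-1$, while $b(\lambda)$ is the coefficient of $r$ in the $m=+1$ mode. Modes $\pm 1$ are coupled only through the non-rotationally-invariant part of the conformal factor
\[
\rho=\frac{4|x|^2}{(1+|x^2+z_k|^2)^2}=\frac{4r^2}{A^2}-\frac{8\bar z_k r^4}{A^3}e^{2i\theta}-\frac{8z_k r^4}{A^3}e^{-2i\theta}+O(r^6),\quad A=1+|z_k|^2,
\]
and this coupling is precisely what generates the asserted limit.

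To leading order as $\lambda\to-\infty$, the mode-$(-1)$ equation reads $Y_{-1}''+r^{-1}Y_{-1}'-r^{-2}Y_{-1}-\mu^2 r^2 Y_{-1}=0$ with $\mu=4\sqrt{|\lambda|}/A$. The substitution $Y_{-1}=r^{-1}G(s)$, $s=\mu r^2/2$, reduces it to $G''=G$, and the decaying branch normalized by the prescribed $r^{-1}$ singularity gives $Y_{-1}^{(0)}=r^{-1}e^{-\mu r^2/2}$. Inserting $Y_{-1}^{(0)}$ as source into the mode-$(+1)$ equation via the $\rho_2$-coupling and writing $Y_1=rH(s)$ yields the inhomogeneous ODE
\[
sH''(s)+2H'(s)-sH(s)=-\frac{2\bar z_k}{A}\,s\,e^{-s}.
\]
Its homogeneous solutions $H_{\mathrm{bdd}}(s)=\sinh(s)/s$ (bounded at $s=0$) and $H_{\mathrm{dec}}(s)=e^{-s}/s$ (decaying at $\infty$) have Wronskian $-1/s^2$, so the Green's function yielding the unique bounded-decaying solution evaluates at $s=0$ to
\[
H(0)=\int_0^\infty \frac{2\bar z_k}{A}\,s'\,e^{-2s'}\,ds'=\frac{\bar z_k}{2A},
\]
which is the claimed leading value of $b(\lambda)$.

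To promote this leading-order identification to the stated $O(|\lambda|^{-\infty})$ remainder, I would assemble the local expressions above into a global approximate solution $\widetilde Y(\lambda)$ by multiplying by a cut-off supported near $P_k$, and absorb residual errors by applying $(\Delta-\lambda)^{-1}$ and iterating a Born-type perturbative expansion in the higher-order Taylor terms of $\rho$. The crucial point is that the Gaussian factor $e^{-\mu r^2/2}$ inherited from $Y_{-1}^{(0)}$ propagates through each iteration, so cut-off errors and truncation errors of the Taylor expansion produce residuals whose $L^2(X)$-norms are smaller than any polynomial in $|\lambda|^{-1}$. Combining the resolvent bound $\|(\Delta-\lambda)^{-1}\|_{L^2\to L^2}=O(|\lambda|^{-1})$ with the weighted elliptic estimate \eqref{estS} of Lemma~\ref{expansion} then transfers this control to the expansion coefficient at $P_k$. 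The principal technical obstacle is precisely this final transfer: coefficient extraction at a conical singularity is not continuous in $L^2$, so the argument requires combining the Mellin-transform estimates already used for Lemma~\ref{expansion} with the Gaussian decay of $\widetilde Y$ in order to convert an $L^2$-level bound on $Y-\widetilde Y$ into a bound of the same strength on $b(\lambda)-\tfrac12\bar z_k/(1+|z_k|^2)$.
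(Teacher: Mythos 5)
Your leading-order local model is set up correctly and the computation is right as far as it goes: the Gaussian profile $r^{-1}e^{-\mu r^2/2}$ for the singular mode, the source $-\tfrac{2\bar z_k}{A}se^{-s}$ fed into the $m=+1$ mode, and the Green's-function evaluation $H(0)=\bar z_k/(2A)$ all check out and reproduce the claimed constant. The fatal problem is the error estimate. Your Born iteration is an expansion in the higher Taylor coefficients of the conformal factor, and a truncation at order $r^{2k}$ leaves a residual of size $|\lambda|\,r^{2k+2}e^{-\mu r^2/2}$; on the scale $r^2\sim\mu^{-1}\sim|\lambda|^{-1/2}$ where the Gaussian is of order one, this is $O\bigl(|\lambda|^{(1-k)/2}\bigr)$ relative to the main term. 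So the Taylor-truncation errors are \emph{not} $O(|\lambda|^{-\infty})$ — only the cut-off commutator errors, supported at fixed distance from $P_k$, are. Your scheme therefore yields at best a full asymptotic expansion of $b(\lambda)$ in powers of $|\lambda|^{-1/2}$ with leading term $\bar z_k/(2A)$, and you supply no mechanism forcing every subsequent term to vanish; as written it proves only $b(\lambda)=\tfrac12\bar z_k/(1+|z_k|^2)+O(|\lambda|^{-1/2})$, which is too weak both for the statement and for its use in Proposition~\ref{pro} (where the decay of $b(\lambda)-b(-\infty)$ must beat $(\lambda-\xi)^{-s}$ near $s=0$).

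The paper's proof avoids this entirely by observing that the metric near $P_k$ in the distinguished parameter $x=\sqrt{z-z_k}$ coincides \emph{exactly} (not just to leading Taylor order) with the pullback metric of the model function $\hat f(w)=(w^2+z_k)/(1-\bar z_k w^2)$ on $\Bbb CP^1$, which is exactly solvable: conjugating the Legendre-function solution of the antipodal model by an isometry of the sphere gives a model $\widehat Y$ with $b(\lambda)=\tfrac12\bar z_k/(1+|z_k|^2)$ for \emph{all} $\lambda$. The only discrepancy between $Y$ on $X$ and the glued $\rho\widehat Y$ then comes from the commutator $[\Delta^*,\rho]\widehat Y$, supported at fixed positive distance from $P_k$ where $\widehat Y$ is genuinely $O(|\lambda|^{-\infty})$. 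This also resolves your acknowledged ``transfer'' difficulty: rather than trying to push an $L^2$ bound on $Y-\rho\widehat Y$ through the Mellin machinery of Lemma~\ref{expansion}, the paper reads off the coefficient difference directly from the boundary form, $4\pi\bigl(b(\lambda)-\tfrac12\bar z_k/(1+|z_k|^2)\bigr)=\mathsf q[\,Y-\rho\widehat Y,\overline{Y}\,]$ as in \eqref{star1}, which is an inner product of exponentially small quantities. To repair your argument you would either have to sum your entire Born series and prove all corrections beyond the first vanish, or recognize the exact solvability of the local model — at which point you have reproduced the paper's proof.
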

\begin{proof}
{\bf Case 1.} Consider the meromorphic function $f: X={\mathbb C}P^1\to {\mathbb C}P^1$  given by $z=f(w)=w^2$; the critical values of $f$ are $z_1=0$ and $z_2=\infty$. Clearly, $w$ coincides with the distinguished parameter $x=\sqrt{z-z_1}$, the metric  $f^*\mathsf m$ and the Laplace-Beltrami operator $\Delta^*$ are given by~\eqref{lap}, where $z_k=z_1=0$.
Introduce the geodesic polar coordinates $(r,\varphi)$ on $({\mathbb C}P^1, f^*\mathsf m)$ with center at $\infty\in\Bbb CP^1$ by setting $\varphi=2\arg w\in [0,4\pi)$ and $\cot(r/2)=|w|^2$, ${r}\in[0,\pi]$.  In the coordinates $(r,\varphi)$ we have
$$ \Delta^* = -\partial_r^2-\cot r \,{\partial_r}-(\sin r)^{-2}\partial_\varphi^2$$
 and the function $Y$ with asymptotic~\eqref{expY} can be found by separation of variables.  Namely, we seek for $Y$ of the form 
\begin{equation}\label{Y0}
Y(r, \varphi; \lambda)=R(\cos r)e^{-i \varphi/2}.
\end{equation}
For~\eqref{Y0} the equation $(\Delta^*-\lambda)Y=0$  reduces to  
the Legendre equation
\begin{equation}\label{Leg}
(1-t^2)R''(t)-2t R'(t)+\left[\lambda-\left(\frac{1}{2}\right)^2\frac{1}{1-t^2}\right]R(t)=0
\end{equation}
on the line segment  $[-1,1]$, where  $t=\cos r$ and the solution  $R(t)$   should be
 bounded at $t=1$ and have the asymptotic $R(\cos {r})=\sqrt{\tan ({{r}}/{2})}+O(1)$ as ${r}\to \pi$ (i.e. as $x\to 0$).
Observe that $R(t)=-\frac{2}{\sqrt{\pi}\cos(\nu\pi)}Q^{1/2}_\nu(t)$, where $Q^{1/2}_\nu$ is  the associated Legendre function $$Q^{1/2}_\nu(\cos {r})=-\left(\frac{\pi}{2\sin{r}}\right)^{1/2}\sin \bigl((\nu+1/2){r}\bigr)$$ satisfying~\eqref{Leg} with $\lambda=\nu(\nu+1)$; see \cite[p. 359, f-la 14.5.13]{NIST}.
This together with~\eqref{Y0}  gives
\begin{equation}\label{spec}
Y(r, \varphi; \lambda)=\frac{1}{w}\left(\frac{\cos (\nu{r})}{\cos(\nu\pi)}+\frac{\sin (\nu{r})}{\cos(\nu\pi)}|w|^2\right).
\end{equation}
Since $w=x$ and 
$$
\frac{\cos \nu{r}}{\cos\nu\pi}=1-\nu\tan(\nu\pi) \frac{{r}-\pi}{\cot ({r}/2)}|x|^2+O(|x|^4)=1+2\nu\tan(\nu\pi)|x|^2+o(|x|^2)\ \text{ as } x\to 0,
$$
 we conclude that  in the asymptotic~\eqref{expY} of~\eqref{spec} we have $b(\lambda)\equiv 0$ (and also $c(\lambda)\equiv 0$ and $a(\lambda)=(1+2\nu)\tan(\nu\pi)$).

{\bf Case 2.} Consider $\hat f: {\mathbb C}P^1\to {\mathbb C}P^1$ given by $z=\hat f(w)=\frac{w^2+z_1}{1-\bar z_k w^2}$; the critical values of $\hat f$ are $z_1$ and $-1/\bar z_1$. As in the first case, the metric $\hat f^*\mathsf m$ has two antipodal $4\pi$-conical points (at  $w=0$ and $w=\infty$). However the distinguished parameter $x=\sqrt{z-z_1}$ does not coincide with $w$ if $z_1\neq 0$. As a consequence, the corresponding  function $Y$ and the coefficient $b(\lambda)$ in its asymptotic \eqref{expY}  can be different from those obtained in Case 1.

We notice that the isometry $z\mapsto \frac{\alpha z+\beta}{-\overline{\beta} z+\alpha}$
of the base $({\mathbb C}P^1, \mathsf m)$ of a ramified covering $f: X\to {
\mathbb C}P^1$ can be lifted to the corresponding isometry of $(X, f^*\mathsf m)$ and the latter commutes with $\Delta^*$. Take the isometry
 $z\mapsto \frac{z-z_1}{\bar z_1 z-1}$ of  $({\mathbb C}P^1, \mathsf m)$ sending $z_1$ to $0$ and let $J$ be its lift to $({\mathbb C}P^1, \hat f^*\mathsf m)$. We transform $Y$ from \eqref{spec} by $J$ and renormalize 
\begin{equation}\label{hat}
\widehat Y= (1+|z_1|^2)^{-1/2}  Y\circ J.
\end{equation}
It is straightforward to check that $\widehat Y$ has the asymptotic \eqref{expY} in the distinguished local parameter $x=\sqrt{z-z_1}$ and for the corresponding coefficient $b(\lambda)$ we have
\begin{equation*}\label{as-infty}
b(\lambda)=\frac{1}{2}\frac{\bar z_1}{1+|z_1|^2}.\end{equation*}
{\bf Case 3.} Finally, consider the general case.  Let  $X$ be a compact Riemann surface and let $f: X\to {\Bbb  C}P^1$ be a meromorphic function with simple poles and simple critical points $P_1,\dots, P_M$. 

Consider, for instance, the critical point $P_1$.
The function $f$  has the same critical value $z_1$ as the function $\hat f$ from Case 2. Small vicinities $ U( P_1)$ and $\widehat U(\widehat P_1)$ of the corresponding critical points $P_1\in X$ and $\widehat P_1\in\widehat X= \Bbb CP^1$ are isometric. In the local parameter $x=\sqrt{z-z_1}$ (which is the distinguished one  for both  $X$ and  $\widehat X$) the differential expressions $\Delta^*$ and $\widehat \Delta^*$
are the same. 

Let $\rho$ be a smooth cut-off function on $\widehat X$  such that $\rho$ is supported inside $\widehat U(\widehat P_1)$, $\rho\equiv1$  in a vicinity of $\widehat P_1$, and $\rho$ depends only on the distance to $\widehat P_1$. We identify $P_1$ and $\widehat P_1$ as well as $U(P_1)$ and $\widehat U(\widehat P_1)$ and then  extend the functions $\rho \widehat Y$ and $(\Delta^*-\lambda)\rho \widehat Y=[\widehat\Delta^*,\rho]\widehat Y$ from 
$U(P_1)\equiv \widehat U(\widehat P_1)$ to $X$ by zero; here $\widehat Y$ is the function~\eqref{hat}  on $\widehat X=\Bbb CP^1$. Clearly, $[\widehat\Delta^*,\rho]\widehat Y\in L_2(X)$
and therefore $(\Delta -\lambda)^{-1}(\Delta^*-\lambda)\rho \widehat Y$ makes sense. 

Now we represent the function $Y$ on $X$ corresponding to $P_1$ in the form 
\begin{equation}\label{tilde}
 Y(\lambda)=\rho \widehat Y(\lambda)+ ( \Delta -\lambda)^{-1}(\Delta^*-\lambda)\rho \widehat Y(\lambda).\end{equation}
Let $b(\lambda)$ be the coefficient from the asymptotic \eqref{expY} of $Y$. 
We have
\begin{equation}\label{star1}
\begin{aligned}
4\pi\left(b(\lambda)-\frac{1}{2}\frac{\bar z_1}{1+|z_1|^2}\right)=\mathsf q\left[Y(\lambda)-\rho\widehat Y(\lambda), \overline{Y(\lambda)}\right]\\=\left((\Delta^*-\lambda)(Y(\lambda)-\rho\widehat Y(\lambda)),\overline{Y(\lambda)}\right)\\
=-\left([\Delta^*,\rho]\widehat Y(\lambda),\overline{\rho \widehat Y(\lambda)+ ( \Delta -\lambda)^{-1}[\Delta^*,\rho] \widehat Y(\lambda)}\right),
\end{aligned}
\end{equation}
where the right hand side goes to zero like  $O(|\lambda|^{-\infty})$  as $\lambda\to-\infty$. Indeed, from the explicit formulas \eqref{spec} and \eqref{hat} one immediately sees that $||[\Delta^*, \rho]Y; L^2(X)||=O(|\lambda|^{-\infty})$ and that $|Y(\lambda)|=O(|\lambda|^{-\infty})$ uniformly on the support of $[\Delta^*, \rho]Y$ as $\lambda \to -\infty$  (i.e.  as $\Im \nu \to +\infty$, where $\lambda=\nu(\nu+1)$). This together with~\eqref{star1} completes the proof.
\end{proof}

In order to find the value $b(0)$ corresponding to a conical point $P_k$ we need to construct a  (unique up to addition of a constant) harmonic function $Y$  bounded everywhere on $X$ except for the point $P_k$, where $Y(x,\bar x;0)=\frac{1}{x}+O(1)$ in the distinguished local parameter $x=\sqrt{z-z_k}$ (cf.~\eqref{expY}). Such a function was explicitly constructed in~ \cite{HK,HKK} using the canonical meromorphic bidifferential $W(\,\cdot\,, \,\cdot\,)$  (also known as the Bergman bidiffential or the Bergman kernel) on $X$. This leads to an explicit expression for the coefficient $b(0)$ in the asymptotic expansion~\eqref{expY} of $Y$, which was obtained as a part of Proposition 6 in \cite{HKK}. To formulate the result we need some preliminaries.

Chose a marking for the Riemann surface $X$, i.e. a canonical basis $a_1, b_1, \dots, a_g, b_g$ of
 $H_1(X, {\mathbf Z})$. Let $\{v_1, \dots,
v_g\}$ be the basis of holomorphic differentials on $X$ normalized via
$$\int_{a_\ell}v_m=\delta_{\ell m},$$
where $\delta_{\ell m}$ is the Kronecker delta. Introduce the matrix $\mathbb B=(\mathbb B_{\ell m})$ of $b$-periods of the marked Riemann surface $X$  with entries ${\mathbb B}_{\ell m}=\int_{b_\ell}v_m$.
Let $W(\,\cdot\,,\,\cdot\,)$ be the canonical meromorphic bidifferential on $X\times X$ with properties $$W(P,Q)=W(Q, P),\quad
\int_{a_\ell}W(\,\cdot\,, P)=0,\quad
\int_{b_m}W(\,\cdot\,, P)=2\pi iv_m(P).$$
The bidifferential $W$ has the only double pole along the diagonal $P=Q$. In any holomorphic local parameter $x(P)$ one has the asymptotics
\begin{equation}\label{funH}W(x(P), x(Q))=\left(\frac{1}{(x(P)-x(Q))^2}+H(x(P), x(Q))\right)dx(P)dx(Q),\end{equation}
$$H(x(P), x(Q))=\frac{1}{6}S(x(P))+O(x(P)-x(Q)),$$
as $Q\to P$, where $S_B(\cdot)$ is the Bergman projective connection.

Consider the Schiffer bidifferential 
$${\cal S}(P, Q)=W(P, Q)-\pi\sum_{\ell, m}(\Im {\mathbb B})^{-1}_{\ell m}v_\ell(P)v_m(Q).$$
The Schiffer projective connection, $S_{Sch}$, is defined via the asymptotic expansion
$${\cal S}(x(P), x(Q))=\left(\frac{1}{(x(P)-x(Q))^2}+\frac{1}{6}S_{Sch}(x(P))+O(x(P)-x(Q))\right)dx(P)dx(Q).$$
One has the equality
$$S_{Sch}(x)=S_B(x)-6\pi \sum_{\ell, m}(\Im {\mathbb B})^{-1}_{\ell m}v_\ell(x)v_m(x).$$
In contrast to the canonical meromorphic differential and the Bergman projective connection, the Schiffer bidifferential and the Schiffer projective connection are independent of the marking of the Riemann surface $X$.
Let us also emphasize that the value of a projective connection at a point of a Riemann surface depends on the choice of the local holomorphic parameter at this point. Now we are in position to formulate the needed result from \cite[Prop. 6]{HKK}.
\begin{lemma}\label{b-value} We have
$$
b(0)=-\frac{1}{6}S_{Sch}(x)\upharpoonright_{x=0},
$$
where $x$ is the distinguished local parameter $x=\sqrt{z-z_k}$ near the point $P_k$. 
\end{lemma}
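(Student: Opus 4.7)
The plan is to construct the harmonic function $Y(0)$ (up to an additive constant) explicitly from the Schiffer bidifferential and then extract $b(0)$ from its local expansion. Uniqueness is immediate: two harmonic functions on $X\setminus\{P_k\}$ that lie in $L^2(X)$ and share the singular part $x^{-1}$ at $P_k$ differ by a bounded harmonic function on $X$, hence by a constant, which does not affect $b(0)$.

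For the construction, set
$$
\omega(P)\;:=\;-\Res_{Q=P_k}\frac{\mathcal{S}(P,Q)}{x(Q)},
$$
a meromorphic $1$-form on $X$ whose only pole is a double one at $P_k$, with principal part $-dx/x^2$ and zero residue. Seek $Y$ in the form $Y = F + \bar G$ where $F$ is a primitive of $\omega$ and $\bar G$ is an antiholomorphic primitive of $\bar g$ for a holomorphic $1$-form $g = \sum_j g_j v_j$ to be determined. Single-valuedness of $Y$ on $X\setminus\{P_k\}$ is the period system $\oint_\gamma \omega = -\overline{\oint_\gamma g}$. On the $a$-cycles this forces $g_j = -\overline{\oint_{a_j}\omega} = -\pi\sum_m(\Im\mathbb{B})^{-1}_{jm}\overline{v_m(P_k)}$, where $v_m(P_k)$ denotes the value of $v_m/dx$ at $P_k$. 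On the $b$-cycles it becomes a compatibility identity, which reduces (after substituting the $a$-normalization and using $\bar{\mathbb{B}}(\Im\mathbb{B})^{-1} = (\Re\mathbb{B})(\Im\mathbb{B})^{-1} - iI$) to a matching between the periods of the Schiffer bidifferential and the conjugate periods of holomorphic $1$-forms. This compatibility check is the main technical step; it is also what forces the Schiffer rather than the Bergman kernel, the difference $-\pi\sum(\Im\mathbb{B})^{-1}_{\ell m} v_\ell v_m$ between them being exactly the correction required for single-valuedness.

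Granted the construction, $b(0)$ falls out of the diagonal expansion $\mathcal{S}(x,y)=\bigl((x-y)^{-2} + \tfrac{1}{6}S_{Sch}(x)+O(x-y)\bigr)dx\,dy$. Taking the residue at $y=0$ gives
$$
\omega \;=\; -\Bigl(\frac{1}{x^2} + \frac{1}{6}S_{Sch}(0) + O(x)\Bigr)dx,
$$
and integrating yields $F = x^{-1} - \tfrac{1}{6}S_{Sch}(0)\cdot x + O(x^2) + \text{const}$. Since $\bar G$ is antiholomorphic in $x$ near $P_k$, it contributes no $x$-term to the local expansion of $Y$, and hence $b(0) = -\tfrac{1}{6}S_{Sch}(0)$, as claimed.
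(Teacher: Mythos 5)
Your argument is correct, and it supplies precisely the construction that the paper does not reproduce: the paper's ``proof'' of this lemma is only a dictionary identifying its $Y$ and $b(0)$ with the objects $f_1$ and $S^{hh}_{\frac12\frac12}(0)$ of \cite[Prop.~6]{HKK}, where the harmonic function with singularity $x^{-1}$ is likewise built from the canonical bidifferential. So you have written out the delegated argument rather than found a genuinely new route. The one step you leave as an assertion --- the $b$-period compatibility --- does check out: with $g_j=-\pi\sum_m(\Im\mathbb B)^{-1}_{jm}\overline{v_m(P_k)}$ (where $v_m(P_k)$ denotes the value of $v_m/dx$ at $x=0$), the normalizations $\oint_{a_j}W(\cdot,Q)=0$ and $\oint_{b_j}W(\cdot,Q)=2\pi i\,v_j(Q)$ give
$$
\oint_{b_j}\omega+\overline{\oint_{b_j}g}\;=\;-2\pi i\,v_j(P_k)+\pi\sum_{\ell,m}\bigl(\mathbb B_{j\ell}-\overline{\mathbb B}_{j\ell}\bigr)(\Im\mathbb B)^{-1}_{\ell m}v_m(P_k)\;=\;0,
$$
since $\mathbb B-\overline{\mathbb B}=2i\,\Im\mathbb B$ and $(\Im\mathbb B)^{-1}$ is real symmetric; this is indeed exactly where the Schiffer correction $-\pi\sum(\Im\mathbb B)^{-1}_{\ell m}v_\ell v_m$ is forced. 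Two points deserve one more line each, though neither is a gap: in the uniqueness step, the bounded harmonic difference lives a priori only on $X\setminus\{P_k\}$, so you should invoke Lemma~\ref{expansion} (or a removable-singularity argument at the conical point) to conclude it extends to an element of the Friedrichs domain and is therefore constant; and you should note that the resolvent-defined $Y(0)$ of~\eqref{expY} satisfies your characterization (its regular part lies in $\mathscr D$, hence is bounded), so the uniqueness statement really does identify the two functions up to an additive constant, which leaves the coefficient of $x$ --- and hence $b(0)=-\frac16 S_{Sch}(0)$ --- unchanged.
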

\begin{proof} We only notice that in~\cite[Prop. 6]{HKK} $Y$ is denoted by $f_1$ (see \cite[f-la (4.7)]{HKK}) and $b(0)$ is denoted by $S^{hh}_{\frac {1} {2} \frac {1}{2}}(0)$ (see \cite[f-la (4.6)]{HKK}). 
\end{proof}

\section{Perturbation of conical singularities } \label{PCS}

Pick a regular point $z_0\in \Bbb C$ such that $z_1,\dots,z_M$ are (end points but) not internal points of the line segments  $[z_0,z_k]$, $k=1,\dots,M$. Consider the union $\mathsf U=\cup_{k=1}^M[z_0,z_k]$. The complement $X\setminus f^{-1}(\mathsf U)$ of the preimage  $f^{-1}(\mathsf U)$ in $X$ has $N$
connected components ($N$ sheets of the covering) and $f$ is a biholomorphic isometry from each of these components equipped with  metric $f^*\mathsf m$ to  $\Bbb CP^1\setminus \mathsf U$ equipped with the standard   metric~\eqref{m}.
 Thus the  Riemann manifold $(X, f^*\mathsf m)$ is isometric to the one obtained by  gluing $N$ copies of the Riemann sphere $(\Bbb CP^1,\mathsf m)$ along the cuts $\mathsf U$ in accordance with a certain gluing scheme.
By perturbation of the conical singularity at $P_k$  we mean a small  shift of the end $z_k$ of the cut $[z_0,z_k]$ on those two copies of the Riemann sphere $(\Bbb CP^1,\mathsf m)$ that  produce $4\pi$-conical angle at $P_k$ after gluing along  $[z_0,z_k]$.

Let $\varrho\in C^\infty_0(\Bbb R)$ be a cut-off function  such that $\varrho(r)=1$ for $x<\epsilon$ and $\varrho(r)=0$ for $r>2\epsilon$, where $\epsilon$ is small. Consider the selfdiffeomorphism
$$
\phi_w(z,\bar z)=z+\varrho(|z-z_k|)w
$$
of the Riemann sphere  $\Bbb CP^1$, where $w\in \Bbb C$ and $|w|$ is small. On two copies of the Riemann sphere (on those two that   produce the conical singularity at $P_k$ after  gluing along  $[z_0,z_k]$) we shift $z_k$ to $z_k+w$ by applying  $\phi_w$. 
We assume that the support of $\varrho$ and the value $|w|$ are so small that only $[z_0,z_k]$ and no other cuts are affected by $\phi_w$. 
 In this section we  consider the perturbed manifold as  $N$ copies of the Riemann sphere $\Bbb CP^1$ glued along the (unperturbed) cuts $\mathsf U$, however  $N-2$ copies are endowed with metric $\mathsf m$ and $2$ certain  copies (mutually glued along $[z_0,z_k]$) are endowed with pullback $\phi_w^*\mathsf m$ of $\mathsf m$ by $\phi_w$.

Let  $(X,f_w^*\mathsf m)$ stand for the perturbed manifold, where $f_w: X\to \Bbb CP^1$ is the  meromorphic function with  critical values $z_1,\dots,z_{k-1}, z_k+w,z_{k+1},\dots,z_M$. By $\Delta_w$ we denote the Friedrichs extension of Laplace-Beltrami operator on $(X,f_w^*\mathsf m)$ and consider $\Delta_w$ as a perturbation of $\Delta_0$ on $(X,f^*\mathsf m)$.

For the matrix representation of the pullback  $\phi_w^*\mathsf m$ of the metric $\mathsf m$ in~\eqref{m} by $\phi_w$ we have
$$
[\phi_w^*\mathsf m](z,\bar z)=\frac{4}{(1+|z+\varrho(|z-z_k|)w|^2)^2} \left(\phi'_w(z,\bar z)\right)^*{\phi'_w(z,\bar z)},
$$
where
$$
\phi'_w(z,\bar z)=\Id+\frac{\varrho'(|z-z_k|)}{2|z-z_k|}\left[
\begin{array}{cc}
   w({\bar z-\bar z_k})  & w(z-z_k)   \\

  \bar w({\bar z-\bar z_k})    & \bar w(z-z_k)
\end{array}
\right]
$$
is the Jacobian matrix;  i.e. the pullback is  given by
$$
\phi_w^*\mathsf m=\frac{1}{2}[d\bar  z \ d  z][\phi_w^*\mathsf m] [dz\ d\bar z]^T.
$$

Clearly, on $\Bbb CP^1$ we have  $ \Delta_0=-\frac{(1+|z|^2)^2} {4} 4\partial_{\bar z}\partial_z$.
A straightforward calculation also shows
\begin{equation}\label{PertDelta}
\begin{aligned}
\Delta_w &-\Delta_0=\Bigl(\frac{2\varrho(|z-z_k|)(z\bar w+\bar z w)}{1+|z|^2} -\frac{\varrho'(|z-z_k|)}{2|z-z_k|}\bigl(w( {\bar z-\bar z_k} )+\bar w( z-z_k )\bigr)\Bigr)\Delta_{0} \\
&+\frac{({1+|z|^2})^2}{4}\left(2\partial_z\frac{\varrho'(|z-z_k|)}{|z-z_k|}w(z-z_k)\partial_z +2\partial_{\bar z} \frac{\varrho'(|z-z_k|)}{|z-z_k|}{\bar w(\bar z-\bar z_k)}\partial_{\bar z}\right)
\\&+O(|w|^2),
\end{aligned}
\end{equation}
where $O(|w|^2)$ stands for a second order operator with smooth coefficients  supported on $\supp \varrho'(|z-z_k|)$ and uniformly bounded by $C|w|^2$.

Notice that the domain $\mathscr D$ of $\Delta_w$ does not depend on $w$. Consider $\mathscr D$ as a Hilbert space endowed with  graph norm of $\Delta_0$.
 Let $\lambda$ be an eigenvalue of $\Delta_0$ of multiplicity $m$. Let $\Gamma$ be a closed curve enclosing $\lambda$ but no other eigenvalues of $\Delta_0$. Then $$\|(\Delta_0-\xi)^{-1};\mathcal B(L^2;\mathscr D)\|\leq c\|(\Delta_0-\xi)^{-1};\mathcal B(L^2)\|\leq C$$ uniformly in $\xi\in \Gamma$. The resolvent $(\Delta_w-\xi)^{-1}$ exists for all $\xi\in\Gamma$ provided $|w|$ is so small that  $\|(\Delta_w-\Delta_0); \mathcal B(\mathscr D; L^2)\|<1/C$.
Moreover,  $\|(\Delta_w-\xi)^{-1}-(\Delta_0-\xi)^{-1}; \mathcal B(L^2,\mathscr D)\|\to 0$ as $|w|\to 0$ uniformly in $\xi\in\Gamma$.  Therefore the total projection $P_w$ for the eigenvalues of $\Delta_w$ lying inside  $\Gamma$ is given by
$$
P_w=-\frac{1}{2\pi i}\oint_\Gamma (\Delta_w-\xi)^{-1}\,d\xi.
$$
 The continuity of $P_w$ implies that $\dim P_w L^2=\dim P_0 L^2=m$, i.e. the sum of multiplicities of the eigenvalues of $\Delta_w$ lying inside $\Gamma$ is equal to $m$ (provided $|w|$ is small); these eigenvalues are said to form the $\lambda$-group~\cite{Kato}.

 \begin{lemma}\label{L1}
  Consider the power sum symmetric polynomial  $p_n(w)=\sum_{j=1}^m\lambda^n_j(w)$ of degree $n=0,1,2,\dots$ for the  $\lambda$-group $\lambda_1,\dots,\lambda_m$. As $w\to 0$ we obtain
   $$
  p_n(w)=m\lambda^n+n\lambda^{n-1}(Aw+B\bar w)+O(|w|^2),
  $$
  where $\lambda=\lambda_j(0)$, $j=1,\dots,m$,  is the eigenvalue  of $\Delta_0$ of multiplicity $m$. Moreover, the coefficients $A$ and $B$ are given by
  \begin{equation}\label{AB}
  \begin{aligned}
  A=2i \lim_{\epsilon\to0+}  \sum_{j=1}^m
  \oint_{|z-z_k|=\epsilon} (\partial_z\Phi_j)^2\,dz,
  \quad B= -2i\lim_{\epsilon\to0+}\sum_{j=1}^m 
  \oint_{|z-z_k|=\epsilon} (\partial_{\bar z}\Phi_j)^2\,d\bar z,
  \end{aligned}
  \end{equation}
where integration runs around the conical point at $z_k$ through two spheres $\Bbb CP^1$ glued to each other along the cut $[z_0,z_k]$ and  $\Phi_1,\dots,\Phi_m$ are (real) normalized eigenfunctions of $\Delta_0$ corresponding to the eigenvalue $\lambda$; i.e. $\Phi_j=\overline{\Phi_j}$, $\|\Phi_j; L^2(X)\|=1$, and $\operatorname{span}\{\Phi_1,\dots,\Phi_m\}=P_0L^2(X)$.

 \end{lemma}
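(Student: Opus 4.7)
My plan is to combine Kato's first-order analytic perturbation theory, realized via a contour integral of the resolvent, with an integration-by-parts argument that reduces the mean eigenvalue shift of the $\lambda$-group to a contour integral around the moving conical point.

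\textbf{Starting point.} I would write
$$ p_n(w)=\Tr(\Delta_w^n P_w)=-\frac{1}{2\pi i}\oint_\Gamma \xi^n \Tr(\Delta_w-\xi)^{-1}\,d\xi, $$
expand $(\Delta_w-\xi)^{-1}=(\Delta_0-\xi)^{-1}-(\Delta_0-\xi)^{-1}V(\Delta_0-\xi)^{-1}+O(|w|^2)$ with $V:=\Delta_w-\Delta_0$ (uniformly in $\xi\in\Gamma$, by the operator-norm estimates established just before the lemma), take the trace using the spectral expansion $\Tr[V(\Delta_0-\xi)^{-2}]=\sum_k(\lambda_k-\xi)^{-2}(V\Phi_k,\Phi_k)$, and observe that only the eigenvalues in the $\lambda$-group contribute after the residue computation $\oint_\Gamma \xi^n(\lambda-\xi)^{-2}\,d\xi=2\pi i\,n\lambda^{n-1}$. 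This yields
$$ p_n(w)-m\lambda^n=n\lambda^{n-1}\sum_{j=1}^m(V\Phi_j,\Phi_j)+O(|w|^2). $$
Splitting $V=wV_1+\bar w V_2+O(|w|^2)$ off the formula (\ref{PertDelta}) for $\Delta_w-\Delta_0$, the task reduces to computing $A=\sum_j(V_1\Phi_j,\Phi_j)$ and $B=\sum_j(V_2\Phi_j,\Phi_j)$.

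\textbf{Reduction to contour integrals.} The key algebraic identity is
$$ \tfrac{\varrho'(|z-z_k|)}{|z-z_k|}(z-z_k)=2\partial_{\bar z}\varrho(|z-z_k|) $$
together with its complex conjugate, which rewrites the coefficients appearing in (\ref{PertDelta}) as $\bar\partial$- and $\partial$-derivatives of $\varrho$. The $w$-part $V_1$ splits into a ``potential'' piece $\alpha_w\Delta_0$ (with $\alpha_w=2\varrho\bar z/(1+|z|^2)-\partial_z\varrho$) and a ``flux'' piece $\tfrac{(1+|z|^2)^2}{2}\partial_z\bigl(\tfrac{\varrho'(|z-z_k|)}{|z-z_k|}(z-z_k)\partial_z\bigr)$. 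On an eigenfunction the potential piece reduces via $\Delta_0\Phi_j=\lambda\Phi_j$ to $\lambda\int\alpha_w\Phi_j^2\,dVol$; integrating the $-\partial_z\varrho$ summand by parts (the boundary contribution vanishes since $\oint d\bar z=0$ on a small circle) produces two volume integrals. For the flux piece, the first integration by parts has no boundary contribution because $\varrho'$ vanishes near $z_k$, and the second step uses the identity above to convert $\tfrac{\varrho'(|z-z_k|)}{|z-z_k|}(z-z_k)(\partial_z\Phi_j)^2=2\partial_{\bar z}\varrho\cdot(\partial_z\Phi_j)^2$; one more integration by parts, combined with $\partial_{\bar z}\partial_z\Phi_j=-\lambda\Phi_j/(1+|z|^2)^2$, makes the interior pieces of the potential and the flux cancel exactly. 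What remains is the boundary term
$$ 2i\lim_{\delta\to 0}\oint_{|z-z_k|=\delta}\varrho(\partial_z\Phi_j)^2\,dz, $$
and since $\varrho\equiv 1$ near $z_k$ this is the contour integral claimed in (\ref{AB}); summing over $j$ gives $A$, and the computation for $B$ is the complex conjugate (with $\partial_z$ replaced by $\partial_{\bar z}$ and $dz$ by $d\bar z$).

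\textbf{Main obstacle.} The hard part is the careful bookkeeping of Stokes orientations and the signs coming from $dz\wedge d\bar z=-2i\,dA$, together with the justification that every boundary limit $\delta\to 0+$ actually exists and takes the stated form. The latter is ensured by the Friedrichs-extension description $\mathscr D\subset H^1(X)$ and Lemma~\ref{expansion}, which give the expansion $\Phi_j=c_j+a_j\bar x+b_j x+O(|x|^{2-\epsilon})$ in the distinguished parameter $x=\sqrt{z-z_k}$. The induced local behaviour $(\partial_z\Phi_j)^2\,dz\sim (b_j^2/2x)\,dx$ near $x=0$ makes every boundary integrand an integrable pole, guarantees the integrability needed for each integration by parts, and ensures that the resulting contour integrals (one revolution on the single-valued $x$-disc, equivalent to the two-sheeted loop $|z-z_k|=\delta$ in the statement) have finite limits as $\delta\to 0+$.
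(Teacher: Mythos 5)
Your proposal follows essentially the same route as the paper: first-order resolvent perturbation theory on the contour $\Gamma$ reducing $p_n(w)-m\lambda^n$ to $n\lambda^{n-1}\sum_j((\Delta_w-\Delta_0)\Phi_j,\Phi_j)+O(|w|^2)$, followed by the explicit form \eqref{PertDelta} of the perturbation and the Stokes theorem to convert the volume integrals into the contour integrals \eqref{AB}, with Lemma~\ref{expansion} guaranteeing the limits exist. The only differences are cosmetic (a direct residue computation where the paper integrates by parts in $\xi$, and a more explicit description of the Stokes step), so the argument is correct and matches the paper's proof.
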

 \begin{proof} We have $p_n(w)=\Tr(\Delta_w^nP_w)$. Thus
 $$
\begin{aligned}
p_n(w)-m\lambda^n & =-\frac{1}{2\pi i  }\Tr\oint_\Gamma (\xi^n-\lambda^n)(\Delta_w-\xi)^{-1}\,d\xi
\\
&=-\frac{1}{2\pi i  }\Tr\oint_\Gamma (\xi^n-\lambda^n)\sum_{k=1}^\infty(\Delta_0-\xi)^{-1}\bigl[(\Delta_0-\Delta_w)(\Delta_0-\xi)^{-1}\bigr]^k\,d\xi.
\end{aligned}
$$
Taking into account that $\partial_\xi (\Delta_w-\xi)^{-1}=(\Delta_w-\xi)^{-2}$ and
$$
\Tr \partial_\xi \bigl((\Delta_0-\Delta_w)(\Delta_0-\xi)^{-1}\bigr)^k=k\Tr(\Delta_w-\xi)^{-1}\bigl[(\Delta_0-\Delta_w)(\Delta_0-\xi)^{-1}\bigr]^k
$$
(here we applied the identity $\Tr \mathcal A\mathcal B=\Tr \mathcal B\mathcal A$) we obtain
$$
\begin{aligned}
p_n(w)-m\lambda^n=-\frac{1}{2\pi i  }\Tr\oint_\Gamma (\xi^n-\lambda^n)\sum_{k=1}^\infty\frac{1}{k}\partial_\xi\bigl[(\Delta_0-\Delta_w)(\Delta_0-\xi)^{-1}\bigr]^k\,d\xi
\\
=\frac{1}{2\pi i  }\Tr\oint_\Gamma n\xi^{n-1}\sum_{k=1}^\infty\frac{1}{k}\bigl[(\Delta_0-\Delta_w)(\Delta_0-\xi)^{-1}\bigr]^k\,d\xi
\\
=\frac{1}{2\pi i  }\Tr\oint_\Gamma n(\xi^{n-1}-\lambda^{n-1})\sum_{k=1}^\infty\frac{1}{k}\bigl[(\Delta_0-\Delta_w)(\Delta_0-\xi)^{-1}\bigr]^k\,d\xi
\\
+\frac{1}{2\pi i  }n\lambda^{n-1}\Tr\oint_\Gamma \sum_{k=1}^\infty\frac{1}{k}\bigl[(\Delta_0-\Delta_w)(\Delta_0-\xi)^{-1}\bigr]^k\,d\xi
\\
=\frac{1}{2\pi i  }\Tr\oint_\Gamma n(n-1)\xi^{n-2}\sum_{k=1}^\infty\frac{1}{k(k+1)}(\Delta_0-\Delta_w)\bigl[(\Delta_0-\Delta_w)(\Delta_0-\xi)^{-1}\bigr]^k\,d\xi
\\
+\frac{1}{2\pi i  }n\lambda^{n-1}\Tr\oint_\Gamma \sum_{k=1}^\infty\frac{1}{k}\bigl[(\Delta_0-\Delta_w)(\Delta_0-\xi)^{-1}\bigr]^k\,d\xi
\\
=\frac{1}{2\pi i  }n\lambda^{n-1}\Tr\oint_\Gamma (\Delta_0-\Delta_w)(\Delta_0-\xi)^{-1}\,d\xi+O(|w|^2);
\end{aligned}
$$
here we integrated by parts two times and implemented~\eqref{PertDelta} to estimate the remainder. Thus
 \begin{equation}\label{a1}
 \begin{aligned}
 p_n(w)-m\lambda^n &=n\lambda^{n-1}\Tr(\Delta_w-\Delta_0)P_0+O(|w|^2)\\ &=n\lambda^{n-1}\sum_{j=1}^m \bigl((\Delta_w-\Delta_0)\Phi_j,\Phi_j\bigr)_{L^2(X)}+O(|w|^2).
 \end{aligned}
 \end{equation}

 Thanks to~\eqref{PertDelta} we also obtain
 \begin{equation}\label{a2}
  \bigl((\Delta_w-\Delta_0)\Phi_j,\Phi_j\bigr)_{L^2(X)}=A_j w+B_j\bar w+O(|w|^2),
 \end{equation}
 where
  $$\begin{aligned}
A_j= \int\left [  \frac{4}{({1+|z|^2})^2}\left (\frac{2\varrho(|z-z_k|)\bar z }{1+|z|^2}\right.\right.&\left.-\frac{\varrho'(|z-z_k|)}{2|z-z_k|}( {\bar z-\bar z_k}) \right)\lambda\Phi^2_j
\\
&\left.-2\frac{\varrho'(|z-z_k|)}{|z-z_k|}(z-z_k)(\partial_z\Phi_j)^2\right]\frac{dz\wedge d\bar z}{-2i},
\end{aligned}
$$
$$
\begin{aligned}
B_j= \int\left [   \frac{4}{({1+|z|^2})^2}\left (\frac{2\varrho(|z-z_k|) z }{1+|z|^2}\right.\right. & \left.-\frac{\varrho'(|z-z_k|)}{2|z-z_k|}( { z- z_k}) \right)\lambda\Phi^2_j
\\
& \left.-2\frac{\varrho'(|z-z_k|)}{|z-z_k|}(\bar z-\bar z_k)(\partial_{\bar z}\Phi_j)^2\right]\frac{dz\wedge d\bar z}{-2i};
\end{aligned}
$$
here thanks to  $\varrho$ the integrand is supported near $z_k$ and  integration runs through two spheres glued along the cut $[z_0,z_k]$.
Finally, the Stokes theorem implies
\begin{equation}\label{a3}
\begin{aligned}
A_j=2i \lim_{\epsilon\to0+}\left(\oint_{|z-z_k|=\epsilon} (\partial_z\Phi_j)^2\,dz-\lambda \oint_{|z-z_k|=\epsilon}\Phi_j^2 (1+|z|^2)^{-2}\,d\bar z\right),
\\
B_j=-2i \lim_{\epsilon\to0+} \left(\oint_{|z-z_k|=\epsilon} (\partial_{\bar z}\Phi_j)^2\,d\bar z-\lambda \oint_{|z-z_k|=\epsilon}\Phi_j^2 (1+|z|^2)^{-2}\,d z\right).
\end{aligned}
\end{equation}
Since $\Phi_j(p)\leq C$ for $p\in X$,  the last integrals in both formulas~\eqref{a3} tend to zero as $\epsilon\to 0+$.
The assertion follows from~\eqref{a1},~\eqref{a2}, and~\eqref{a3}.
  \end{proof}

 \begin{lemma}\label{L2} Consider the elementary symmetric polynomials
 $$
 e_n(w)=\sum_{1\leq j_1<j_2<\cdots<j_n\leq m} \lambda_{j_1}(w)\,\lambda_{j_2}(w)\cdots\lambda_{j_n}(w),\quad n=1,\dots,m,
 $$
  for the  $\lambda$-group $\lambda_1,\dots,\lambda_m$. As $w\to 0$ we have
  $$
  e_n(w)=\binom mn \Bigl(\lambda^n+n\lambda^{n-1}(Aw+ B\bar w)\Bigr)+O(|w|^2)
  $$
  with  $A$ and $B$ given in~\eqref{AB}.
 \end{lemma}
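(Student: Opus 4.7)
The plan is to derive this from Lemma \ref{L1} by passing from power sums to elementary symmetric polynomials. Since $\Delta_w$ is self-adjoint, the eigenvalues in the $\lambda$-group are real, so I can write $\lambda_j(w)=\lambda+\mu_j(w)$ with $\mu_j(w)\in\Bbb R$ for $j=1,\dots,m$.

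The preliminary step is to extract two facts from Lemma \ref{L1}. Taking $n=1$ gives directly
$$\sum_{j=1}^m\mu_j(w)=p_1(w)-m\lambda=Aw+B\bar w+O(|w|^2).$$
Taking $n=2$ and using the algebraic identity
$$p_2(w)=m\lambda^2+2\lambda\sum_{j=1}^m\mu_j(w)+\sum_{j=1}^m\mu_j(w)^2$$
forces $\sum_j\mu_j(w)^2=O(|w|^2)$; because each $\mu_j$ is real the summands are nonnegative, so I obtain the uniform pointwise bound $\max_j|\mu_j(w)|=O(|w|)$. This bound is what will let me control the cross-terms in the expansion of $e_n$.

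The main step is the direct combinatorial expansion
$$e_n(w)=\sum_{|S|=n}\prod_{j\in S}\bigl(\lambda+\mu_j(w)\bigr)=\binom{m}{n}\lambda^n+\lambda^{n-1}\sum_{|S|=n}\sum_{j\in S}\mu_j(w)+O(|w|^2),$$
where the uniform bound $|\mu_j(w)|=O(|w|)$ absorbs all products $\prod_{j\in T}\mu_j(w)$ with $|T|\geq 2$ into the $O(|w|^2)$ remainder. A double-counting identity then gives
$$\sum_{|S|=n}\sum_{j\in S}\mu_j(w)=\binom{m-1}{n-1}\sum_{j=1}^m\mu_j(w),$$
and substitution of the asymptotics for $\sum_j\mu_j$ together with the binomial identity $n\binom{m}{n}=m\binom{m-1}{n-1}$ yields the claimed formula. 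A completely equivalent route is to start from the generating function $\sum_{n=0}^m e_n(w)t^n=\prod_{j=1}^m(1+t\lambda_j(w))$, expand each factor to first order in $\mu_j$, and read off the coefficient of $t^n$; Newton's identities expressing $e_n$ as a polynomial in $p_1,\dots,p_n$ would work equally well but are more cumbersome.

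I do not expect any substantive obstacle. The only delicate point is the passage from the $\ell^2$-bound $\sum_j\mu_j^2=O(|w|^2)$ to the pointwise bound $\max_j|\mu_j|=O(|w|)$, which crucially uses the reality of the $\mu_j$; without it one could only control the symmetric combinations appearing in Lemma \ref{L1} and not individual eigenvalue branches, which may in principle split non-smoothly out of the degenerate eigenvalue $\lambda$.
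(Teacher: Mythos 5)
Your route is genuinely different from the paper's: the authors deduce Lemma~\ref{L2} from Lemma~\ref{L1} by induction on $n$ via the Newton identity $e_n=\frac1n\sum_{j=1}^n(-1)^{j-1}e_{n-j}p_j$ (and omit all details), whereas you expand $e_n$ directly in the shifts $\mu_j(w)=\lambda_j(w)-\lambda$ after first extracting the pointwise bound $\max_j|\mu_j(w)|=O(|w|)$ from the $n=1,2$ cases of Lemma~\ref{L1} and the reality of the eigenvalues of the self-adjoint operator $\Delta_w$. That preliminary step is correct and is exactly what is needed to absorb all products of two or more $\mu_j$'s into the remainder; the double-counting identity is also correct. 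Up to the last line your argument is sound and arguably more transparent than an induction on Newton's identities.

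The last line, however, does not deliver the stated formula. What your expansion proves is
\begin{equation*}
e_n(w)=\binom{m}{n}\lambda^n+\binom{m-1}{n-1}\lambda^{n-1}(Aw+B\bar w)+O(|w|^2),
\end{equation*}
and since $\binom{m-1}{n-1}=\frac nm\binom mn$, the first-order coefficient you obtain is $\frac nm\binom mn$, not the $n\binom mn$ appearing in the statement; the identity $n\binom mn=m\binom{m-1}{n-1}$ does not close this gap, it quantifies it (a factor of $m$). In fact the statement of Lemma~\ref{L2} as printed is inconsistent with Lemma~\ref{L1}: for $n=1$ one has $e_1=p_1$, and Lemma~\ref{L1} gives $e_1(w)=m\lambda+(Aw+B\bar w)+O(|w|^2)$, whereas the printed Lemma~\ref{L2} would give $m\lambda+m(Aw+B\bar w)+O(|w|^2)$. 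Your corrected version is the one that is actually needed downstream: it yields $\prod_j\bigl(\xi-\lambda_j(w)\bigr)=(\xi-\lambda)^m-(\xi-\lambda)^{m-1}(Aw+B\bar w)+O(|w|^2)$ and hence exactly the assertion of Lemma~\ref{diff}, namely $\sum_j\bigl(\xi-\lambda_j(w)\bigr)^{-2}=m(\xi-\lambda)^{-2}+2(Aw+B\bar w)(\xi-\lambda)^{-3}+O(|w|^2)$. So the defect is not in your method but in the final claim that the computation reproduces the printed coefficient; you should state the corrected asymptotics of $e_n$ (and flag the misprint in the lemma) rather than assert that the binomial identity yields the formula as written.
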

 \begin{proof} The proof by induction relies on Lemma~\ref{L1} and the relation $$e_n(w)=\frac{1}{n}\sum_{j=1}^n(-1)^{j-1}e_{n-j}(w) p_j(w),$$ where $e_0(w)=1$. We omit details.
 \end{proof}
\begin{lemma} \label{diff}As $w\to 0$ for the $\lambda$-group $\lambda_1,\dots,\lambda_m$ we have
$$
\sum_{j=1}^m\frac{1}{\bigl(\xi-\lambda_j(w)\bigr)^2}=\frac{m}{(\xi-\lambda)^2}+\frac{2(Aw+B\bar w)}{\bigl(\xi-\lambda\bigr)^3}+O(|w|^2)
$$
with $A$ and $B$ given in~\eqref{AB}.
\end{lemma}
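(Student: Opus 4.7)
The plan is to Taylor-expand each summand around $\xi-\lambda$ and then use Lemmas~\ref{L1} and~\ref{L2} to identify the coefficients. Concretely, I would write
\begin{equation*}
\frac{1}{(\xi-\lambda_j(w))^2}=\frac{1}{(\xi-\lambda)^2}\Bigl(1-\frac{\lambda_j(w)-\lambda}{\xi-\lambda}\Bigr)^{-2}
=\sum_{k=0}^{\infty}\frac{(k+1)(\lambda_j(w)-\lambda)^k}{(\xi-\lambda)^{k+2}},
\end{equation*}
valid provided $|\lambda_j(w)-\lambda|<|\xi-\lambda|$, which holds for small $|w|$ uniformly in $\xi\in\Gamma$. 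Summing over $j=1,\dots,m$ would give
\begin{equation*}
\sum_{j=1}^{m}\frac{1}{(\xi-\lambda_j(w))^2}=\frac{m}{(\xi-\lambda)^2}+\frac{2\,s_1(w)}{(\xi-\lambda)^3}+\frac{3\,s_2(w)}{(\xi-\lambda)^4}+\dots,
\end{equation*}
where $s_k(w):=\sum_{j=1}^{m}(\lambda_j(w)-\lambda)^k$.

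The first step is to evaluate $s_1(w)$: expanding $s_1(w)=p_1(w)-m\lambda$ and applying Lemma~\ref{L1} with $n=1$ yields $s_1(w)=Aw+B\bar w+O(|w|^2)$, which produces the advertised $(\xi-\lambda)^{-3}$ term. Second, I need to show that all remaining terms are absorbed into $O(|w|^2)$. For this I would use Lemma~\ref{L1} with $n=1,2$ to compute
\begin{equation*}
s_2(w)=p_2(w)-2\lambda p_1(w)+m\lambda^2=\bigl[m\lambda^2+2\lambda(Aw+B\bar w)\bigr]-2\lambda\bigl[m\lambda+(Aw+B\bar w)\bigr]+m\lambda^2+O(|w|^2)=O(|w|^2).
\end{equation*}
Since $\Delta_w$ is self-adjoint the eigenvalues $\lambda_j(w)$ are real, so $s_2(w)=\sum_j(\lambda_j(w)-\lambda)^2$ is a sum of non-negative terms; hence $|\lambda_j(w)-\lambda|=O(|w|)$ individually, and therefore $s_k(w)=O(|w|^k)=O(|w|^2)$ for every $k\geq 2$.

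Putting these two facts together, the tail $\sum_{k\geq 2}(k+1)s_k(w)/(\xi-\lambda)^{k+2}$ is $O(|w|^2)$ uniformly in $\xi\in\Gamma$ (the geometric series converges absolutely because $|\lambda_j(w)-\lambda|/|\xi-\lambda|$ can be made arbitrarily small). This gives the claimed expansion. The only subtle point, and the one I would flag as the main obstacle, is the uniform-in-$\xi$ control of the Taylor remainder together with the bound $|\lambda_j(w)-\lambda|=O(|w|)$; both follow from the self-adjointness of $\Delta_w$ and from Lemma~\ref{L1} applied with $n=1,2$, as sketched above. An alternative, slightly more invariant route would be to write $\sum_j(\xi-\lambda_j(w))^{-2}=-\frac{d}{d\xi}\Tr[(\Delta_w-\xi)^{-1}P_w]$ and expand the resolvent via the Neumann series used in the proof of Lemma~\ref{L1}; but the Taylor/Newton approach above is shorter given what is already established.
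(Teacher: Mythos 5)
Your argument is correct, but it takes a genuinely different route from the paper. The paper never expands in a geometric series: it writes $\sum_j(\xi-\lambda_j(w))^{-2}=-\partial_\xi\bigl(P'(\xi)/P(\xi)\bigr)$ for the characteristic polynomial $P(\xi)=\prod_j(\xi-\lambda_j(w))$, expresses the coefficients of $P$ and $P'$ through the elementary symmetric polynomials $e_j(w)$, and then feeds in the asymptotics of Lemma~\ref{L2} to expand the resulting quotient of two polynomials in $\xi$. That route involves only finite sums, so no convergence or uniformity issues arise, but it requires the combinatorial detour through Newton's identities (Lemma~\ref{L2}). You instead work directly with the power sums $s_k(w)=\sum_j(\lambda_j(w)-\lambda)^k$ of the \emph{shifted} eigenvalues and use only Lemma~\ref{L1}, bypassing Lemma~\ref{L2} entirely; the price is that you must control an infinite tail, and your mechanism for doing so --- deducing $|\lambda_j(w)-\lambda|=O(|w|)$ for each individual eigenvalue from $s_2(w)=O(|w|^2)$ together with the reality of the spectrum of the self-adjoint $\Delta_w$ --- is a nice, self-contained observation that does not appear in the paper. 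Your explicit attention to uniformity in $\xi\in\Gamma$ (needed later when the expansion is integrated over contours) is also a point in favour of your write-up. Both proofs rest ultimately on Lemma~\ref{L1}; yours is shorter and makes Lemma~\ref{L2} unnecessary for this particular statement, while the paper's is purely algebraic in $\xi$ and avoids any series-remainder estimates.
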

\begin{proof} As is well known,   $$\prod_{j=1}^m(\xi-\lambda_j(w))=\sum_{j=0}^m \xi^{m-j}(-1)^j e_j(w).$$ Notice that
$$
\sum_{j=1}^m\frac{1}{\bigl(\xi-\lambda_j(w)\bigr)^2}=-\partial_\xi\sum_{j=1}^m \frac{1}{\xi-\lambda_j(w)}=-\partial_\xi\frac{\sum_{j=0}^{m-1} (m-j) \xi^{m-j-1}(-1)^j e_{j}(w)}{\sum_{j=0}^m \xi^{m-j}(-1)^j e_j(w)}.
$$
%
%
%



We differentiate the right hand side and use Lemma~\ref{L2} to derive asymptotics of resulting numerator and denominator as $w\to0$. We obtain
$$
\sum_{j=1}^m\frac{1}{\bigl(\xi-\lambda_j(w)\bigr)^2}=m\frac{(\xi-\lambda)^{2m-2}-2(m-1)(\xi-\lambda)^{2m-3}(Aw+B\bar w)+O(|w|^2)}{(\xi-\lambda)^{2m}-2m(\xi-\lambda)^{2m-1}(Aw+B\bar w)+O(|w|^2)}.
$$
This implies the assertion.
\end{proof}

\section{Variation of $\ln\det'\Delta$ due to perturbation of conical singularities}

\begin{proposition}\label{pro} Let $w\in \Bbb C$ correspond to perturbation of the conical singularity at $P_k$   by shifting $z_k$ to $z_k+w$ (see Sec.~\ref{PCS} for details).
Then$$
\partial_w \ln\det'\Delta\upharpoonright_{w=0}=\frac{b(0)-b(-\infty)}{2}, \quad \partial_{\bar w} \ln\det'\Delta\upharpoonright_{w=0}=\frac{\overline{b(0)-b(-\infty)}}{2},
$$

where $b(\lambda)$ is the coefficient in the asymptotic~\eqref{expY} of the special solution $Y(\lambda)\in L^2(X)$ to $(\Delta^*-\lambda)Y(\lambda)=0$ growing near $P_k$ as $x^{-1}$, where $x$ is the distinguished holomorphic parameter $x=\sqrt{z-z_k}$.
\end{proposition}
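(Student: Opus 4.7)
The plan is threefold: (i) use $\zeta$-regularization to express $\partial_w\ln\det'\Delta|_{w=0}$ as the spectral sum $\sum_j(\partial_w\lambda_j)/\lambda_j$, organized by $\lambda$-groups as $\sum_\lambda A^{(\lambda)}/\lambda$; (ii) identify each $A^{(\lambda)}$ with a multiple of $\sum_{j\in\lambda\text{-group}}b_j^2$ by inserting the local expansion of $\Phi_j$ from Lemma~\ref{expansion} into the contour-integral formula~(\ref{AB}); (iii) identify the resulting spectral series with $(b(0)-b(-\infty))/2$ via Lemmas~\ref{b(lambda)} and~\ref{Eigenf}. The $\partial_{\bar w}$-formula will then follow by complex conjugation, since $\ln\det'\Delta_w$ is real-valued and, from Lemma~\ref{L1}, $B^{(\lambda)}=\overline{A^{(\lambda)}}$.

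For (i), I would use $\ln\det'\Delta=-\zeta'(0)$ together with the heat-kernel representation $\zeta(s)=\Gamma(s)^{-1}\int_0^\infty t^{s-1}(\Tr e^{-t\Delta_w}-1)\,dt$ established in Section~2. The analyticity of the family $\{\Delta_w\}$ from Section~\ref{PCS} yields $\partial_w\Tr e^{-t\Delta_w}|_{w=0}=-t\sum_{j\geq 1}(\partial_w\lambda_j)e^{-t\lambda_j}$, so $\partial_w\zeta(s)|_{w=0}=-s\sum_{j\geq 1}(\partial_w\lambda_j)\lambda_j^{-s-1}$; differentiating at $s=0$ gives $\partial_w\ln\det'\Delta=\sum_j(\partial_w\lambda_j)/\lambda_j$ in the sense of analytic continuation. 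Grouping by eigenvalue and applying Lemma~\ref{L1}, this equals $\sum_\lambda A^{(\lambda)}/\lambda$.

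For (ii), I substitute $\partial_z\Phi_j=b_j/(2x)+O(|x|^{-\epsilon})$ (from Lemma~\ref{expansion} with $x=\sqrt{z-z_k}$) into~(\ref{AB}). The change of variables $z\mapsto x$ converts the loop around $z_k$ on the two glued sheets of the covering into a single loop around $x=0$, and residues yield $A^{(\lambda)}=2\pi\sum_{j\in\lambda\text{-group}}b_j^2$ (the sign is dictated by the orientation of $\partial(X\setminus\{|x|<\epsilon\})$ as used in formula~(\ref{q})). For (iii), Lemmas~\ref{b(lambda)} and~\ref{Eigenf} combine to give $b'(\lambda)=4\pi\sum_{j\geq 1}b_j^2/(\lambda_j-\lambda)^2$; since $\int_{-\infty}^0 d\lambda/(\lambda_j-\lambda)^2=1/\lambda_j$, termwise integration yields $b(0)-b(-\infty)=4\pi\sum_{j\geq 1}b_j^2/\lambda_j$. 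Combining,
\[
\partial_w\ln\det'\Delta=\sum_\lambda\frac{A^{(\lambda)}}{\lambda}=2\pi\sum_{j\geq 1}\frac{b_j^2}{\lambda_j}=\frac{b(0)-b(-\infty)}{2}.
\]

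The main obstacle is the rigorous justification of $\partial_w\zeta'(0)=-\sum_j(\partial_w\lambda_j)/\lambda_j$, because this spectral sum converges only conditionally. I would circumvent the analytic-continuation issue by routing the argument directly through Lemma~\ref{diff} rather than through $\zeta(s)$: the uniform identity $\partial_w\sum_j(\xi-\lambda_j(w))^{-2}|_{w=0}=\sum_\lambda 2A^{(\lambda)}/(\xi-\lambda)^3$, valid for $\xi\in\Bbb C\setminus\sigma(\Delta_0)$, can be integrated in $\xi$ and compared with the contour representation of $\zeta'(0)$ coming from the heat-kernel asymptotics of Section~2, with all interchanges of sum and integral reduced to the termwise convergent $\int_{-\infty}^0 d\xi/(\lambda_j-\xi)^2=1/\lambda_j$ and to the finiteness of $\sum_j b_j^2/\lambda_j$ that emerges in step (iii).
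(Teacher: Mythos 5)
Your skeleton coincides with the paper's: you invoke Lemma~\ref{diff} for the $w$-derivative of the $\lambda$-group contribution to $\Tr(\Delta-\lambda)^{-2}$, you compute $A^{(\lambda)}=2\pi\sum_{j}b_j^2$ exactly as in the paper's formula~\eqref{Ax}, and you close the loop with Lemmas~\ref{Eigenf}, \ref{b(lambda)} and~\ref{binfty}; the conjugation argument for $\partial_{\bar w}$ (using $B=\overline{A}$ and reality of $\det'\Delta$) is also fine. The gap is in the central analytic step, and it is genuine. Your step (iii) asserts that termwise integration of $b'(\lambda)=4\pi\sum_j b_j^2/(\lambda_j-\lambda)^2$ over $\lambda\in(-\infty,0]$ yields $b(0)-b(-\infty)=4\pi\sum_j b_j^2/\lambda_j$. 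That interchange of sum and integral requires $\sum_j|b_j|^2/\lambda_j<\infty$, which is never established and is almost certainly false: Lemma~\ref{b(lambda)} gives $\|Y(\lambda)\|^2=4\pi a'(\lambda)$, and the model computation in Lemma~\ref{binfty} (where $a(\lambda)=(1+2\nu)\tan(\nu\pi)\sim -2|\lambda|^{1/2}$) shows $\sum_j|b_j|^2/(\lambda_j+|\lambda|)^2\sim c|\lambda|^{-1/2}$, i.e.\ $\sum_{\lambda_j\leq\Lambda}|b_j|^2\sim c\,\Lambda^{3/2}$, so $\sum_j|b_j|^2/\lambda_j$ diverges. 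Any convergence of $\sum_j b_j^2/\lambda_j$ would have to come from phase cancellation, and your remark that its finiteness ``emerges in step (iii)'' is circular, since step (iii) presupposes the termwise integration that would produce it. The same difficulty infects step (i): you need the zeta-regularized value of $\sum_j(\partial_w\lambda_j)/\lambda_j$ to coincide with the Abel-type regularization implicit in $\int_{-\infty}^{0}b'(\lambda)\,d\lambda$, and identifying these two regularizations is precisely the nontrivial content of the proof.

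The paper's mechanism, which you gesture at but do not supply, is to keep the regulator in place throughout: one writes $\partial_w\zeta(s;\Delta-\xi)\upharpoonright_{w=0}$ as a contour integral of $(\lambda-\xi)^{1-s}\,\partial_w\Tr(\Delta-\lambda)^{-2}$ over $\Gamma_\xi$, substitutes the \emph{absolutely convergent} series $\sum_j b_j^2/(\lambda_j-\lambda)^2=\frac{1}{16\pi^2}\bigl(Y(\lambda),\overline{Y(\lambda)}\bigr)=\frac{1}{4\pi}b'(\lambda)$ before any integration in $\lambda$, integrates by parts in $\lambda$ at the level of the genuine holomorphic function $b(\lambda)-b(-\infty)$ (using Lemma~\ref{binfty} to control the contribution at $-\infty$), and only then continues analytically to $s=0$, where the prefactor $s$ and the Cauchy theorem applied to $b(\lambda)$, holomorphic near $\lambda=0$ by Lemma~\ref{b(lambda)}, produce $(b(-\infty)-b(0))/2$ without ever forming the series $\sum_j b_j^2/\lambda_j$. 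To repair your argument you would need to reproduce this (or an equivalent) regularized exchange of limits; as written, the passage from $\sum_\lambda A^{(\lambda)}/\lambda$ to $(b(0)-b(-\infty))/2$ does not go through.
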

\begin{proof}

First we recall that only $\lambda$-groups but not single eigenvalues $\lambda_j$ can be differentiated with respect to $w$ or $\bar w$. Similarly, the series $\Tr(\Delta-\lambda)^{-2}=\sum_{j=0}^\infty(\lambda_j-\lambda)^{-2}$ cannot be differentiated term by term, however, thanks to Lemma~\ref{diff} we can always differentiate partial finite sums  corresponding to the $\lambda$-groups. Thus, if summation with respect to $j$ runs through $m$ eigenvalues $\lambda_k=\lambda_{k+1}=\cdots=\lambda_{k+m}$ forming  $\lambda_k$-group, by Lemma~\ref{diff} we obtain
$$
\partial_w \Bigl(\sum_{j}\frac{1}{(\lambda_j-\lambda)^2}\Bigr)\upharpoonright_{w=0}=-\frac{2A}{(\lambda_k-\lambda)^3}.
$$
Let us rewrite the formula~\eqref{AB} for the coefficients $A$ in terms of the local parameter $x$:
\begin{equation}\label{Ax}
A=i\lim_{\epsilon\to0+}\sum_{j}\oint_{|{x}|=\epsilon}\frac{1}{{x}}(\partial_{x}\Phi_j)^2\,d{x}.
\end{equation}
By Lemma~\ref{expansion} the asymptotic~\eqref{Phi_j}  of  $\Phi_j$  can be differentiated, we have $\partial_x\Phi_j=b_j+O(|x|^{1-\epsilon})$ with any $\epsilon>0$ as $x\to 0$. This  together with~\eqref{Ax} implies $A=2\pi\sum_{j} b_j^2$, and therefore
$$
\partial_w \Bigl(\sum_{j}\frac{1}{(\lambda_j-\lambda)^2}\Bigr)\upharpoonright_{w=0}=-4\pi\sum_j \frac{b_j^2}{(\lambda_j-\lambda)^3}.
$$

Now we are ready to compute the partial derivative of zeta function with respect to $w$ at $w=0$. Let $\Gamma_\xi$ be a contour running at a sufficiently small distance $\epsilon>0$ around the cut $(-\infty, \xi]$, starting at   $-\infty+i\epsilon$, and ending at  $-\infty-i\epsilon$. We have
$$\partial_w \zeta(s;\Delta-\xi) \upharpoonright_{w=0}=\frac{1}{2\pi i (s-1)}\int_{\Gamma_\xi}(\lambda-\xi)^{1-s}\partial_w\Tr(\Delta-\lambda)^{-2}\upharpoonright_{w=0}\,d\lambda
$$
$$
=\frac{2i}{  (s-1)}\int_{\Gamma_\xi}(\lambda-\xi)^{1-s} \sum_{j=0}^\infty \frac{b_j^2}{(\lambda_j-\lambda)^3}\,d\lambda.
$$
Thanks to Lemma~\ref{Eigenf} we can integrate by parts to obtain
$$
\partial_w \zeta(s;\Delta-\xi) \upharpoonright_{w=0}=-i\int_{\Gamma_\xi}(\lambda-\xi)^{-s} \sum_{j=0}^\infty \frac{b_j^2}{(\lambda_j-\lambda)^2}\,d\lambda\,.
$$
Now we use the equality~\eqref{a_j} from Lemma~\ref{Eigenf} together with Lemma~\ref{b(lambda)} and arrive at
$$\begin{aligned}
\partial_w \zeta(s;\Delta-\xi) \upharpoonright_{w=0}=\frac{-i}{16\pi^2}\int_{\Gamma_\xi}(\lambda-\xi)^{-s}\bigl(Y(\lambda),\overline{Y(\lambda)}\bigr)\,d\lambda\\=\frac{-i}{4\pi}\int_{\Gamma_\xi}(\lambda-\xi)^{-s}
\frac{d}{d\lambda}\left\{b(\lambda)-b(-\infty)\right\}\,d\lambda,
\end{aligned}
$$
where $b(-\infty)=\lim_{\lambda\to-\infty}b(\lambda)$.
Using Lemma~\ref{binfty} and integrating by parts once again we get
$$
\partial_w \zeta(s;\Delta-\xi) \upharpoonright_{w=0}=\frac{-is}{4\pi}\int_{\Gamma_\xi}(\lambda-\xi)^{-s}\left\{b(\lambda)-b(-\infty)\right\}\,d\lambda.
$$
Since $\lambda\mapsto b(\lambda)$ is holomorphic in $\Bbb C\setminus\sigma(\Delta)$ and in a neighbourhood of zero (see Lemma~\ref{b(lambda)}), the Cauchy Theorem implies
$$
\partial_w \zeta'(0;\Delta) \upharpoonright_{w=0}=\frac{1}{4\pi i}\int_{\Gamma_\xi}(\lambda-\xi)^{-1}b(\lambda)\,d\lambda\upharpoonright_{\xi=0}=\frac{b(-\infty)-b(0)}{2}\,.
$$
Since ${\rm det}'\,\Delta=\exp\{-\zeta'(0)\}$, this completes the  proof.
\end{proof}

Now the explicit formulas for $b(0)$ and $b(-\infty)$ (see Lemmas~\ref{binfty} and~\ref{b-value} in Sec.~\ref{sect}) together with Proposition~\ref{pro} imply the following Theorem.
\begin{theorem}\label{main} Let $X$ be a compact Riemann surface of genus $g\geq 0$ and let $f$ be a meromorphic function on $X$ of degree $N$ with $N$ simple poles and $M=2N+2g-2$ simple critical points $P_1, \dots, P_M$.
Let $z_k=f(P_k)$ be the critical values of $f$. Consider the determinant ${\rm det}' \Delta$ of the (Friedrichs) Laplacian $\Delta$ in the conical metric $f^*{\mathsf m}$ with constant curvature $1$ on $X$ as a function on the moduli space $H_{g, N}(1, \dots, 1)$ of pairs $(X, f)$ with local coordinates $z_1, \dots, z_M$. Then this function satisfies the following system of differential equations
\begin{equation}\label{System}
\frac{\partial \log {\det}'\Delta}{\partial z_k}=-\frac{1}{12}S_{Sch}(x_k)\upharpoonright_{x_k=0}-\frac{1}{4}\frac{\bar z_k}{1+|z_k|^2}, \quad k=1, \dots, M,
\end{equation}
where $x_k(P)=\sqrt{f(P)-f(P_k)}$ is the distinguished local parameter near the critical point $P_k$ and $S_{Sch}$ is the Schiffer projective connection on $X$.
\end{theorem}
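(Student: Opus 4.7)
The proof plan is essentially to assemble three results that are already proved in the excerpt: Proposition~\ref{pro}, Lemma~\ref{binfty}, and Lemma~\ref{b-value}. The perturbation set up in Section~\ref{PCS} is precisely that which shifts the critical value $z_k$ to $z_k+w$ while keeping all other critical values fixed and while leaving the complex structure on $X$ intact away from the shifted singularity. Since $(z_1,\dots,z_M)$ are local holomorphic coordinates on the Hurwitz space $H_{g,N}(1,\dots,1)$, the identification $\partial_w\upharpoonright_{w=0}=\partial/\partial z_k$ holds verbatim, and likewise $\partial_{\bar w}\upharpoonright_{w=0}=\partial/\partial \bar z_k$. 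This is the observation I would state first.

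Next, I would invoke Proposition~\ref{pro} to obtain
\begin{equation*}
\frac{\partial \log\det{}'\Delta}{\partial z_k}=\frac{b(0)-b(-\infty)}{2},
\end{equation*}
where $b(\lambda)$ is the coefficient in~\eqref{expY} associated to the critical point $P_k$ in the distinguished local parameter $x_k=\sqrt{f-z_k}$. I would then substitute the explicit value $b(-\infty)=\tfrac12\bar z_k/(1+|z_k|^2)$ from Lemma~\ref{binfty} and the explicit value $b(0)=-\tfrac{1}{6}S_{Sch}(x_k)\upharpoonright_{x_k=0}$ from Lemma~\ref{b-value}. The arithmetic
\begin{equation*}
\frac{1}{2}\left(-\frac{1}{6}S_{Sch}(x_k)\upharpoonright_{x_k=0}-\frac{1}{2}\frac{\bar z_k}{1+|z_k|^2}\right)=-\frac{1}{12}S_{Sch}(x_k)\upharpoonright_{x_k=0}-\frac{1}{4}\frac{\bar z_k}{1+|z_k|^2}
\end{equation*}
produces exactly~\eqref{System}.

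Since there is essentially no hard step left, the only delicate point worth flagging is that Lemmas~\ref{binfty} and~\ref{b-value} single out $P_k$ and use the \emph{distinguished} parameter $x_k=\sqrt{z-z_k}$, and that the perturbation of Section~\ref{PCS} acts near the same $P_k$ with $w$ pushing $z_k\mapsto z_k+w$ in the same coordinate; I would verify this compatibility explicitly so that the sign and normalization of $b$ match between the three lemmas. Since $\log\det'\Delta$ is real, the $\partial/\partial \bar z_k$ equation is automatic from the $\partial/\partial z_k$ one by complex conjugation (and is also directly given in Proposition~\ref{pro}), so no additional work is required. This yields the theorem for every $k=1,\dots,M$.
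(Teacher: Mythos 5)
Your proposal is correct and coincides with the paper's own argument: the theorem is stated there as an immediate consequence of Proposition~\ref{pro} combined with the explicit values $b(-\infty)=\tfrac12\bar z_k/(1+|z_k|^2)$ from Lemma~\ref{binfty} and $b(0)=-\tfrac16 S_{Sch}(x_k)\upharpoonright_{x_k=0}$ from Lemma~\ref{b-value}, exactly as you assemble them. Your explicit remark that $\partial_w\upharpoonright_{w=0}=\partial/\partial z_k$ because the perturbation of Section~\ref{PCS} realizes the coordinate vector field on $H_{g,N}(1,\dots,1)$ is the only point the paper leaves implicit, and flagging it is a welcome addition rather than a deviation.
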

The system \eqref{System} admits explicit integration. In \cite{KK} (see also \cite{KK1, KKZ}) it was shown that
the function ${\det}\Im{\mathbb B}\, |\tau|^2$, where $\tau$ is the so called Bergman tau-function on the Hurwitz space $H_{g, N}(1, \dots, 1)$,
satisfies
 $$\frac{\partial \log ({\det} \Im {\mathbb B}\,|\tau|^2)}{\partial z_k}=-\frac{1}{12}S_{Sch}(x_k)\upharpoonright_{x_k=0}, \quad k=1, \dots, M;$$
in genus $0$ the factor ${\det}\Im {\mathbb B}$ should be omitted.
This together with Theorem \ref{main} immediately leads to the main result of the present paper.
\begin{theorem} The  explicit formula
\begin{equation}\label{teorema}
{\det}'\Delta=C\,{\det}\, \Im {\mathbb B} |\tau|^2 \prod_{k=1}^M(1+|z_k|^2)^{-1/4}
\end{equation}
is valid for the determinant of the Friedrichs extension  $\Delta$ of the Laplacian on  $(X,f^*{\mathsf m})$.
\end{theorem}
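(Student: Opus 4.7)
The plan is to integrate the system of first-order PDEs established in Theorem \ref{main}, using the known variational formula for the Bergman tau-function as a ``template'' antiderivative. First I would note the elementary computation
$$
\frac{\partial}{\partial z_k}\log(1+|z_k|^2)^{-1/4}=-\frac{1}{4}\frac{\bar z_k}{1+|z_k|^2},
$$
and combine it with the variational formula
$$
\frac{\partial}{\partial z_k}\log\bigl(\det\Im{\mathbb B}\,|\tau|^2\bigr)=-\frac{1}{12}S_{Sch}(x_k)\upharpoonright_{x_k=0}
$$
quoted from \cite{KK, KK1, KKZ} (with the $\det\Im{\mathbb B}$ factor omitted in genus zero). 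Summing gives
$$
\frac{\partial}{\partial z_k}\log\Bigl(\det\Im{\mathbb B}\,|\tau|^2\prod_{j=1}^M(1+|z_j|^2)^{-1/4}\Bigr)=-\frac{1}{12}S_{Sch}(x_k)\upharpoonright_{x_k=0}-\frac{1}{4}\frac{\bar z_k}{1+|z_k|^2},
$$
which matches the right-hand side of \eqref{System} in Theorem \ref{main} exactly.

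Second, I would consider the ratio
$$
F(X,f):=\frac{\det'\Delta}{\det\Im{\mathbb B}\,|\tau|^2\,\prod_{k=1}^M(1+|z_k|^2)^{-1/4}},
$$
which is a smooth positive real-valued function on the Hurwitz space $H_{g,N}(1,\ldots,1)$. By the first step and Theorem \ref{main} we have $\partial_{z_k}\log F=0$ for every $k$. Since both numerator and denominator are real and positive, $\log F$ is real, so complex conjugation of the equation $\partial_{z_k}\log F=0$ yields $\partial_{\bar z_k}\log F=0$ as well. Equivalently, one can observe that Proposition \ref{pro} already supplies the $\bar w$-derivative as the complex conjugate of the $w$-derivative, and that the same is true for the proposed antiderivative. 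Hence $\log F$ is locally constant on $H_{g,N}(1,\ldots,1)$.

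Third, since $H_{g,N}(1,\ldots,1)$ is a connected complex manifold (as recalled in the introduction), $F$ must equal a single constant $C$ on the entire space, which gives \eqref{teorema}. The only substantive point requiring care is the global single-valuedness of the combination $\det\Im{\mathbb B}\,|\tau|^2$ on the Hurwitz space: although $\tau$ itself may transform by a root of unity under a change of marking of $X$, and $\det\Im{\mathbb B}$ transforms accordingly, the product $\det\Im{\mathbb B}\,|\tau|^2$ is invariant, so $F$ is indeed a function on $H_{g,N}(1,\ldots,1)$. I do not anticipate a serious obstacle at this last step; the real work has been carried out in Theorem \ref{main} and in the cited variational formula for $\tau$, and what remains is only the bookkeeping of comparing antiderivatives on a connected manifold.
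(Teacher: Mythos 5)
Your proposal is correct and follows essentially the same route as the paper: the authors likewise integrate the system \eqref{System} by invoking the variational formula $\partial_{z_k}\log(\det\Im{\mathbb B}\,|\tau|^2)=-\frac{1}{12}S_{Sch}(x_k)\upharpoonright_{x_k=0}$ from \cite{KK,KK1,KKZ}, matching it against the elementary derivative of $\prod_k(1+|z_k|^2)^{-1/4}$, and concluding that the ratio is constant on the connected Hurwitz space. Your write-up merely makes explicit the bookkeeping (reality of $\log F$, connectedness, invariance under change of marking) that the paper leaves implicit in its one-line deduction.
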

\begin{remark}   
 We recall that under the linear fractional transformations $z\mapsto \frac{az+b}{cz+d}$, $ad-bc=1$, the function $\tau^2$ transforms as
$$\tau^2\mapsto \tau^2\prod_{k=1}^M(cz_k+d)^{-1/2};$$
see~\cite[Lemma~1]{KKZ}. Notice that under the $\rm{SU}(2)$ transformation $z\mapsto \frac{dz-\bar c}{cz+d}$, $|d|^2+|c|^2=1$, the factor $F=\prod_{k=1}^M(1+|z_k|^2)^{-1/4}$ in \eqref{teorema} 
 transforms as
$$ F\mapsto F\prod_{k=1}^M|cz_k+d|^{1/2}.$$
Thus we see that the right hand side in~\eqref{teorema} is $\rm{SU}(2)$-invariant as it should be due to $\rm{SU}(2)$-invariance of ${\det}'\Delta$.
\end{remark}

\end{document}